\title[Inhomogeneous multinomial measures] {\bf Multifractal analysis of some
  inhomogeneous multinomial measures with distinct analytic Olsen's
  $b$ and $B$ functions} \author{Shuang Shen} \date{November 2013}
\DeclareMathOperator{\Dim}{\mathrm{Dim}}
\DeclareMathOperator*{\esssup}{\mathrm{ess\,sup}}
\newcommand{\ball}{\mathrm{B}}
\newtheorem{thm}{Theorem}
\newtheorem{pro}[thm]{Proposition}
\newtheorem{lemma}[thm]{Lemma}
\newtheorem{cor}[thm]{Corollary}
\theoremstyle{remark}
\newtheorem{remark}{Remark}
\newtheorem{definition}{Definition}
\begin{document}
\subjclass[2000]{Primary 28A80; Secondary 28A78}

\thanks{The research is supported by CNSF: No. 11271223.}

\begin{abstract}
  Inhomogeneous multinomial measures on the mixed symbolic spaces and
  the real line are given. By counting the zeros of the corresponding
  generalized Dirichlet polynomials, one obtains a probability measure
  whose Olsen's functions $b$ and $B$ are analytic and their graphs
  differ except at two points where they are tangent. Also,
  interpretations of the Legendre transform of $b$ and $B$ are given
  in terms of dimensions.

{\bf Key words:} Multifractal analysis, Hausdorff dimension, packing
dimension, inhomogeneous multinomial measures, Olsen's $b$ and $B$
functions, Gray code.
\end{abstract}

\maketitle

\section{Introduction}
In this paper, we want to describe a probability measure whose
Olsen's $b$ and $B$ functions are analytic and their graphs differ
except at two points where they are tangent. This is motivated by an
example in \cite{Ben}. We work first in the symbolic space and we
construct a class of inhomogeneous multinomial measures, some of which
have good properties such that we know exactly the numbers and orders
of zeros of the corresponding generalized Dirichlet polynomials. Then
we compute the Hausdorff and packing dimensions of the level sets of
local H\"{o}lder exponents of the measure, which turn out to be the
Legendre transform of the Olsen functions. At last, we consider the
image measure onto the real line applying a Gray code, so to obtain a
doubling measure.

During the completion of the paper, we just happen to know the work by
Barral \cite{Bar2} in which he proves that for two functions under
certain conditions, there exists a compactly supported Borel positive
and finite measure $\mu$ on $\mathbb{R}^d$ such that the free energy
functions associated with $\mu$ are just the two given functions. Also
the constructed measure is exact dimensional.

\section{Notations and definitions}
\subsection{The Olsen measures}{\ }

We deal with a metric space $(\mathbb{X},d)$ possessing the
Besicovitch property:
\medskip

\textsl{There exists a constant $C_\ball\in \mathbb{N}$ such that one can
  extract $C_\ball$ countable families $\{\{\ball_{j,k}\}_k\}_{1\leq j\leq
    C_\ball}$ from any collection $\mathscr{B}$ of balls in $X$ so that
\begin{enumerate}
\item[--] $\bigcup_{j,k}\ball_{j,k}$ contains the centers of the elements of
$\mathscr{B}$,
\item[--] for any $j$ and $k\neq k'$, $\ball_{j,k}\cap \ball_{j,k'}=\emptyset$.
\end{enumerate}}
\medskip

This is a theorem by Besicovitch that an Euclidean space fulfils
this condition. It is obvious that an ultrametric space also has
this property.

Let $\mu$ be a Borel probability measure on $\mathbb{X}$. If $E$ is
a nonempty subset of $\mathbb{X}$ and if $q, t\in \mathbb{R}$ and
$\delta >0$, we introduce the quantities:
\begin{multline*}
\overline{\mathscr{H}}_{\mu,\delta}^{q,t}(E)=\inf\left\{\sum_i^\ast r_i^t
\mu(\ball(x_i,r_i))^q \ :\  \right .\\
\left .(\ball(x_i,r_i))_i \textrm{ centered
  $\delta$-covering of }E\vphantom {\sum_i^\ast}\right\},
\end{multline*}

$$\overline{\mathscr{H}}_{\mu}^{q,t}(E)=\sup_{\delta>0}
\overline{\mathscr{H}}_{\mu,\delta}^{q,t}(E),$$

$$\mathscr{H}_{\mu}^{q,t}(E)=\sup_{F\subset
  E}\overline{\mathscr{H}}_{\mu}^{q,t}(F);$$
and
$$\overline{\mathscr{P}}_{\mu,\delta}^{q,t}(E)=\sup\left\{\sum_i^\ast r_i^t
 \mu(\ball(x_i,r_i))^q \ :\  (\ball(x_i,r_i))_i \textrm{ $\delta$-packing of }E\right\},$$

$$\overline{\mathscr{P}}_{\mu}^{q,t}(E)=
\inf_{\delta>0}\overline{\mathscr{P}}_{\mu,\delta}^{q,t}(E),$$

$$\mathscr{P}_{\mu}^{q,t}(E)=\inf_{E\subset \cup_i E_i}\sum_i
\overline{\mathscr{P}}_{\mu}^{q,t}(E_i).$$

The star means that we omit in the summation the terms which are
obviously infinite (i.e. zero raised to a negative power). However, we
will leave the star out for simplicity. $\ball(x_i,r_i)$ stands for
the open ball centered at point $x_i\in X$ with radius $r_i$, and we
denote by $\ball_i$ for short in the context.  When we say
$(\ball(x_i,r_i))_i$ is a $\delta$-covering of $E$, we mean that
$\bigcup \ball(x_i,r_i)\supseteq E$ and for any $i$, $r_i< \delta$.
When we say $(\ball(x_i,r_i))_i$ is a centered $\delta$-covering of
$E$, we mean it is not only a $\delta$-covering of $E$, but also for
any $i$, $x_i\in E$. At last, when we say $(\ball(x_i,r_i))_i$ is a
$\delta$-packing of $E$, we mean that for any $i$, $x_i\in E$, $r_i<
\delta$ and for any $j\neq k$, $\ball(x_j,r_j)\cap
\ball(x_k,r_k)=\emptyset$.

We can see that respectively, the functions
$\mathscr{H}_{\mu}^{q,t}$, $\mathscr{P}_{\mu}^{q,t}$ and
$\overline{\mathscr{P}}_{\mu}^{q,t}$ are multifractal extensions of
the centered Hausdorff measure $\mathscr{H}^t$, the packing measure
$\mathscr{P}^t$, and the packing premeasure
$\overline{\mathscr{P}}^t$.

\subsection{The Olsen's functions}{\ }

The functions $\mathscr{H}_{\mu}^{q,t}$, $\mathscr{P}_{\mu}^{q,t}$
and $\overline{\mathscr{P}}_{\mu}^{q,t}$ induce dimensions to each
subset $E$ of $\mathbb{X}$. They are defined by
$$b_{\mu,E}(q)=\sup\left\{s\ :\ \mathscr{H}_\mu^{q,s}(E)=
\infty\right\}=\inf\left\{s\ :\ \mathscr{H}_\mu^{q,s}(E)=0\right\},$$
$$B_{\mu,E}(q)=\sup\left\{s\ :\ \mathscr{P}_\mu^{q,s}(E)=
\infty\right\}=\inf\left\{s\ :\ \mathscr{P}_\mu^{q,s}(E)=0\right\},$$
$$\tau_{\mu,E}(q)=\sup\left\{s\ :\ \overline{\mathscr{P}}_\mu^{q,s}(E)=
\infty\right\}=\inf\left\{s\ :\ \overline{\mathscr{P}}_\mu^{q,s}(E)=0\right\}.$$

They are multifractal extensions of the Hausdorff dimension
$\dim E$, the packing dimension $\Dim E$, and the packing
predimension $\Delta E$.

Denote by $S_\mu$ the support of $\mu$. For simplicity, we will
write $b_\mu=b_{\mu,S_\mu}$, $B_\mu=B_{\mu,S_\mu}$, $\tau_\mu=\tau_{\mu,S_\mu}$. And if the measure $\mu$ is clear in the context, we will omit $\mu$ and write $b,B,\tau$ respectively. They satisfy the
following properties (see \cite{Ols}):

\medskip
$1^\circ$ $b(0)=\dim S_\mu$, $B(0)=\Dim S_\mu$,
$\tau(0)=\Delta S_\mu$.

$2^\circ$ $b(1)=B(1)=\tau(1)=0$.

$3^\circ$ $b\leq B\leq \tau$.

$4^\circ$ $b$ is decreasing and $B$ and $\tau$ are convex and
decreasing.
\medskip

There is another way to describe $\tau_{\mu,E}$. Fix $\lambda<1$ and
define:
$$\widetilde{\mathscr{P}}_{\mu,\delta}^{q,t}(E)=\sup\left\{\sum_i r_i^t
 \mu(\ball_i)^q \ :\  (\ball_i)_i \textrm{ packing of }E\ \textrm{with }\lambda\delta<r_i\leq\delta\right\},$$

$$\widetilde{\mathscr{P}}_{\mu}^{q,t}(E)=
\limsup_{\delta\rightarrow
0}\widetilde{\mathscr{P}}_{\mu,\delta}^{q,t}(E),$$

$$\widetilde{\tau}_{\mu,E}(q)=\sup\left\{s\ :\ \widetilde{\mathscr{P}}_\mu^{q,s}(E)=
\infty\right\}=\inf\left\{s\ :\ \widetilde{\mathscr{P}}_\mu^{q,s}(E)=0\right\}.$$

\begin{lemma}\label{l1}  \emph{(see \cite{Ben2})}
$\widetilde{\tau}_{\mu,E}=\tau_{\mu,E}.$ So for any $\lambda<1$, one
has
\begin{multline*}
\tau_{\mu,E}(q)=\limsup_{\delta\rightarrow
0}\frac{-1}{\log \delta}\log\sup\left\{\sum_i
\mu(\ball_i)^q\ :\  \right .\\
 \left .(\ball_i)_i \textrm{ packing of }E\ \textrm{with }\lambda\delta<r_i\leq\delta\vphantom {\sum_i}\right\}.
\end{multline*}

\end{lemma}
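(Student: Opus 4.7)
The plan is to establish the equality $\widetilde{\tau}_{\mu,E}=\tau_{\mu,E}$ via two inequalities and then deduce the explicit limsup formula by comparing $r_i^t$ with $\delta^t$ on the restricted class of packings.

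For the easy direction $\widetilde{\tau}_{\mu,E}(q)\leq\tau_{\mu,E}(q)$, I would observe that any packing contributing to $\widetilde{\mathscr{P}}_{\mu,\delta}^{q,t}(E)$ has $r_i\leq\delta$ and is therefore admissible in $\overline{\mathscr{P}}_{\mu,\delta'}^{q,t}(E)$ for every $\delta'>\delta$. Taking the sup over packings gives $\widetilde{\mathscr{P}}_{\mu,\delta}^{q,t}(E)\leq\overline{\mathscr{P}}_{\mu,\delta'}^{q,t}(E)$, and passing to $\limsup_{\delta\to 0}$ with $\delta'$ fixed, followed by $\inf_{\delta'>0}$, yields $\widetilde{\mathscr{P}}_\mu^{q,t}(E)\leq\overline{\mathscr{P}}_\mu^{q,t}(E)$ and hence the desired ordering of critical exponents.

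For the reverse inequality, I would decompose any centered packing $(\ball_i)_{i\in I}$ of $E$ with $r_i<\delta$ according to scale: set $I_k=\{i\in I:\lambda^{k+1}\delta<r_i\leq\lambda^k\delta\}$, $k\geq 0$, which partitions $I$. Each $(\ball_i)_{i\in I_k}$ is an admissible packing for $\widetilde{\mathscr{P}}_{\mu,\lambda^k\delta}^{q,t}(E)$, so summing over $k$ and then taking the sup over packings produces
$$\overline{\mathscr{P}}_{\mu,\delta}^{q,t}(E)\leq\sum_{k\geq 0}\widetilde{\mathscr{P}}_{\mu,\lambda^k\delta}^{q,t}(E).$$
Fix $t>\widetilde{\tau}_{\mu,E}(q)$ and pick $t'\in(\widetilde{\tau}_{\mu,E}(q),t)$. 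The estimate $r_i^{t-t'}\leq\delta^{t-t'}$ (valid since $t-t'>0$ and $r_i\leq\delta$) upgrades to $\widetilde{\mathscr{P}}_{\mu,\delta}^{q,t}(E)\leq\delta^{t-t'}\widetilde{\mathscr{P}}_{\mu,\delta}^{q,t'}(E)$, and because $\widetilde{\mathscr{P}}_\mu^{q,t'}(E)=0$ one has $\widetilde{\mathscr{P}}_{\mu,\lambda^k\delta}^{q,t'}(E)\leq 1$ uniformly in $k\geq 0$ once $\delta$ is small enough. The previous display is then dominated by the convergent geometric sum
$$\sum_{k\geq 0}(\lambda^k\delta)^{t-t'}=\frac{\delta^{t-t'}}{1-\lambda^{t-t'}},$$
which tends to $0$ as $\delta\to 0$, forcing $\overline{\mathscr{P}}_\mu^{q,t}(E)=0$ and hence $\tau_{\mu,E}(q)\leq t$. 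Letting $t\downarrow\widetilde{\tau}_{\mu,E}(q)$ closes the equality.

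The explicit formula then follows from a size comparison: on packings with $\lambda\delta<r_i\leq\delta$ the ratio $r_i^t/\delta^t$ lies in $[\min(1,\lambda^t),\max(1,\lambda^t)]$, so writing $N_\delta(q)$ for the sup of $\sum_i\mu(\ball_i)^q$ that appears in the statement, one has $\widetilde{\mathscr{P}}_{\mu,\delta}^{q,t}(E)\asymp\delta^t N_\delta(q)$ with multiplicative constants depending only on $\lambda$ and $t$; the critical exponent under $\limsup_{\delta\to 0}$ is therefore exactly $\limsup_{\delta\to 0}\log N_\delta(q)/(-\log\delta)$. I expect the main obstacle to be the reverse inequality, where both the scale decomposition and the geometric estimate rely on the upper bound $r_i\leq\delta$ built into $\widetilde{\mathscr{P}}$ (which is absent from $\overline{\mathscr{P}}$), and the hypothesis $\lambda<1$ is exactly what makes the series $\sum_k\lambda^{k(t-t')}$ converge.
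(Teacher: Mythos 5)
Your proof is correct, and it is essentially the argument the paper implicitly relies on: the paper itself gives no proof of Lemma \ref{l1}, deferring entirely to \cite{Ben2}, and your two-inequality scheme --- the trivial inclusion of annulus packings into $\delta'$-packings for one direction, and for the other the decomposition of an arbitrary $\delta$-packing into the scale classes $\lambda^{k+1}\delta<r_i\leq\lambda^k\delta$ controlled by the geometric series $\sum_k(\lambda^k\delta)^{t-t'}$ via an auxiliary exponent $t'\in(\widetilde{\tau}_{\mu,E}(q),t)$ --- is exactly the standard proof found in that reference. The final comparison $\widetilde{\mathscr{P}}_{\mu,\delta}^{q,t}(E)\asymp\delta^t N_\delta(q)$, with constants $\min(1,\lambda^t)$ and $\max(1,\lambda^t)$ independent of $\delta$, correctly yields the stated $\limsup$ formula, and you rightly identify $\lambda<1$ as what makes the series converge.
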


\subsection{The multifractal formalism}{\ }

\subsubsection{Level sets of local H\"{o}lder exponents}
Let $\mu$ be a measure on $\mathbb{X}$. For $x\in \mathbb{X}$, we
define the local dimensions or local H\"{o}lder exponents of the
measure $\mu$ at point $x$ by
$$ \overline{\alpha}_\mu(x)=\limsup_{r\rightarrow
  0}\frac{\log\mu(\ball(x,r))}{\log r} \textrm{ and
}\underline{\alpha}_\mu(x)=\liminf_{r\rightarrow
  0}\frac{\log\mu(\ball(x,r))}{\log r}. $$

Then for $\alpha,\beta\in\mathbb{R}$, we introduce the sets:
$$\overline{X}_\mu(\alpha)=
\left\{x\in S_\mu\ :\ \,\overline{\alpha}_\mu(x)\leq\alpha\right\},$$
$$\underline{X}_\mu(\alpha)=
\left\{x\in S_\mu\ :\ \,\underline{\alpha}_\mu(x)\geq\alpha\right\},$$
$$X_\mu(\alpha,\beta)=\underline{X}_\mu(\alpha)\cap\overline{X}_\mu(\beta),$$
$$X_\mu(\alpha)=X_\mu(\alpha,\alpha).$$

As previously, we will omit $\mu$ if the measure is clear.
\subsubsection{Some consequences}
As is known, the multifractal formalism aims at giving expressions
of the dimension of the level sets of local H\"{o}lder exponents of
measure $\mu$ in terms of the Legendre transform of some free
``energy'' function (see \cite{Fri}, \cite{Hal}).

Let $f^\ast(x)=\inf_y (xy+f(y))$ denote the Legendre transform of the
function $f$. Olsen proved the following general estimation, claiming
that the Legendre transform of the dimension functions $b$ and $B$ are
upper bounds of the dimensions of level sets.

\begin{thm}\label{t2}  \emph{(see \cite{Ols})}
Let $\mu$ be a probability measure on $\mathbb{X}$. Define
$\underline{a}=\sup_{q>0}-\frac{b(q)}{q}$ and
$\overline{a}=\inf_{q<0}-\frac{b(q)}{q}$. For all $\alpha\in
(\underline{a},\overline{a})$, we have
$$\dim X(\alpha)\leq b^\ast(\alpha),$$
$$\Dim X(\alpha)\leq B^\ast(\alpha).$$
\end{thm}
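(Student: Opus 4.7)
The goal is to show, for every $q\in\mathbb{R}$, the two inequalities $\dim X(\alpha)\leq \alpha q+b(q)$ and $\Dim X(\alpha)\leq \alpha q+B(q)$; taking the infimum over $q$ then yields $b^\ast(\alpha)$ and $B^\ast(\alpha)$ respectively. The role of the hypothesis $\alpha\in(\underline a,\overline a)$ is to guarantee that the quantities $b^\ast(\alpha)$ and $B^\ast(\alpha)$ are finite and nonnegative, so that the bounds are non-vacuous.

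The first step is to decompose $X(\alpha)$ into countably many sets on which the local behavior of $\mu$ is uniform. For each $\epsilon>0$ and $N\in\mathbb{N}$ set
$$X_N^\epsilon=\bigl\{x\in S_\mu\ :\ r^{\alpha+\epsilon}\leq\mu(\ball(x,r))\leq r^{\alpha-\epsilon}\ \text{for all}\ 0<r<1/N\bigr\}.$$
Since $x\in X(\alpha)$ forces both $\underline\alpha_\mu(x)\geq\alpha$ and $\overline\alpha_\mu(x)\leq\alpha$, we have $X(\alpha)\subset\bigcup_N X_N^\epsilon$ for every fixed $\epsilon$. By $\sigma$-stability of $\dim$ and $\Dim$ it suffices to bound these dimensions on each $X_N^\epsilon$.

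The heart of the argument is a pointwise comparison of the integrand appearing in the multifractal premeasures with $r^s$. Consider first $q\geq 0$ and a centered $\delta$-covering $(\ball(x_i,r_i))_i$ of a subset $F\subset X_N^\epsilon$ with $\delta<1/N$. Because $x_i\in X_N^\epsilon$, the lower bound on $\mu(\ball(x_i,r_i))$ gives $\mu(\ball(x_i,r_i))^q\geq r_i^{q(\alpha+\epsilon)}$, hence
$$\sum_i r_i^{s+q(\alpha+\epsilon)}\leq \sum_i r_i^{s}\,\mu(\ball(x_i,r_i))^q.$$
Passing to the infimum over such coverings and then to the supremum over $F\subset X_N^\epsilon$, and comparing centered Hausdorff premeasures to the standard Hausdorff measure via the Besicovitch constant $C_\ball$, one obtains an inequality of the form $\mathscr{H}^{s+q(\alpha+\epsilon)}(X_N^\epsilon)\leq C_\ball\,\mathscr{H}_\mu^{q,s}(X_N^\epsilon)$. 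Taking $s>b(q)$ forces the right-hand side to vanish (by definition of $b$ and monotonicity), so $\dim X_N^\epsilon\leq b(q)+q(\alpha+\epsilon)$. Letting $\epsilon\downarrow 0$ yields the desired bound for $q\geq 0$. For $q<0$ the same argument works with the upper estimate $\mu(\ball(x_i,r_i))\leq r_i^{\alpha-\epsilon}$, which after raising to the negative power reverses to give $\mu(\ball(x_i,r_i))^q\geq r_i^{q(\alpha-\epsilon)}$.

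The packing estimate $\Dim X(\alpha)\leq B^\ast(\alpha)$ is completely analogous: one replaces centered $\delta$-coverings by $\delta$-packings and exploits that $\mathscr{P}_\mu^{q,s}(F)=0$ as soon as $s>B(q)$. The main technical subtlety is the careful handling of the sign of $q$ when raising $\mu(\ball)$ to the $q$-th power, together with the routine comparison between centered Hausdorff (respectively packing) premeasures and their classical counterparts via the Besicovitch property; conceptually, however, the entire argument reduces to the uniform H\"older estimates valid on each $X_N^\epsilon$ combined with the defining vanishing properties of $\mathscr{H}_\mu^{q,s}$ and $\mathscr{P}_\mu^{q,s}$ above the thresholds $b(q)$ and $B(q)$.
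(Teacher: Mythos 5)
Your proof is correct and is essentially the canonical argument of Olsen, which the paper itself does not reproduce (Theorem \ref{t2} is quoted from \cite{Ols} without proof): the decomposition of $X(\alpha)$ into the uniform sets $X_N^\epsilon$, the sign-dependent pointwise bounds $\mu(\ball(x_i,r_i))^q\geq r_i^{q(\alpha+\epsilon)}$ for $q\geq 0$ and $\mu(\ball(x_i,r_i))^q\geq r_i^{q(\alpha-\epsilon)}$ for $q<0$ applied to centered coverings and to packings, and the vanishing of $\mathscr{H}^{q,s}_\mu$ and $\mathscr{P}^{q,s}_\mu$ for $s$ above $b(q)$ and $B(q)$ are exactly the standard route, and your reading of the hypothesis $\alpha\in(\underline{a},\overline{a})$ as forcing $q\alpha+b(q)>0$ for all $q\neq 0$ (hence $b^\ast(\alpha)\geq 0$) is accurate. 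One cosmetic remark: the Besicovitch property is not needed where you invoke it, since the comparison you use only requires the easy direction $\mathscr{H}^t(E)\leq 2^t\,\mathscr{H}^{0,t}_\mu(E)$ (every centered covering by balls of radius $r_i$ is a covering by sets of diameter at most $2r_i$), and since the centers of your coverings lie in $X_N^\epsilon$ the balls automatically have positive $\mu$-measure, so the starred-summation convention never interferes.
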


\begin{definition}
If $B'(q)$ exists and if $\dim X(-B'(q))=\Dim X(-B'(q))=
b^\ast(-B'(q))=B^\ast(-B'(q))$, we say that the measure $\mu$ obeys
the multifractal formalism at point $q$.

\end{definition}

It always needs some extra conditions to obtain a minoration for the
dimensions of level sets.

\begin{lemma}\label{l3}  \emph{(see \cite{Ben2})}
Let $\mu,\nu$ be two probability measures on $\mathbb{X}$. Fix $\lambda<1$. Set
\begin{multline*}
\varphi(x)=\limsup_{\delta\rightarrow
0}\frac{-1}{\log \delta}\log\sup\left\{\sum_i
 \mu(\ball_i)^x\nu(\ball_i)\ :\  \right .\\
 \left .(\ball_i)_i \textrm{ packing of }S_\mu\ \textrm{with }\lambda\delta<r_i\leq\delta\vphantom {\sum_i}\right\}.
\end{multline*}

Assume that
$\varphi(0)=0$, $\nu (S_\mu)>0$, and that $\varphi'(0)$ exists. Let $E=X_\mu(-\varphi'(0))$, then one has
$$\dim E\geq \esssup_{x\in E,\nu} \liminf_{r\rightarrow
0} \frac{\log
\nu(B(x,r))}{\log r},$$
$$\Dim E\geq \esssup_{x\in E,\nu} \limsup_{r\rightarrow
0} \frac{\log
\nu(B(x,r))}{\log r}.$$

\end{lemma}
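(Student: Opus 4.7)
The plan is first to show that the auxiliary measure $\nu$ concentrates on $E=X_\mu(\alpha_0)$ with $\alpha_0:=-\varphi'(0)$, and then to deduce the two dimension lower bounds from Billingsley-type density lemmas applied to $\nu|_E$.

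\textbf{Concentration.} Fix $\epsilon>0$, set $A^+_\epsilon=\{x\in S_\mu:\overline{\alpha}_\mu(x)>\alpha_0+\epsilon\}$, and argue by contradiction that $\nu(A^+_\epsilon)=0$. For each $x\in A^+_\epsilon$ there are arbitrarily small radii $r$ with $\mu(\ball(x,r))\le r^{\alpha_0+\epsilon}$. Partitioning scales by the geometric sequence $\delta_n=\lambda^n$, each such $x$ admits a good radius in infinitely many windows $(\lambda\delta_n,\delta_n]$; a summability argument then produces a subsequence $(\delta_{n_k})$ along which the points of $A^+_\epsilon$ possessing a good radius in $(\lambda\delta_{n_k},\delta_{n_k}]$ carry $\nu$-mass at least some $c>0$. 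The Besicovitch property extracts from the collection $\{\ball(x,r_x)\}$ a packing $(\ball_i)_i$ of radii in that window whose union has $\nu$-mass at least $c/C_\ball$, and for $q<0$ the pointwise bound $\mu(\ball_i)^q\ge r_i^{q(\alpha_0+\epsilon)}\ge\delta_{n_k}^{q(\alpha_0+\epsilon)}$ yields
$$\sum_i\mu(\ball_i)^q\,\nu(\ball_i)\ \ge\ \frac{c}{C_\ball}\,\delta_{n_k}^{q(\alpha_0+\epsilon)}.$$
On the other hand, the definition of $\varphi$ gives the upper bound $\delta^{-\varphi(q)-\eta}$ for all sufficiently small $\delta$ and any fixed $\eta>0$, so $c/C_\ball\le\delta_{n_k}^{-\gamma}$ with $\gamma:=q(\alpha_0+\epsilon)+\varphi(q)+\eta$. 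Using the Taylor expansion $\varphi(q)=-q\alpha_0+o(q)$ at $q=0$, we find $\gamma=q\epsilon+o(q)+\eta$, which for $q<0$ of small modulus and $\eta$ smaller still is strictly negative; then $-\gamma>0$ forces $\delta_{n_k}^{-\gamma}\to 0$, a contradiction. A mirror argument with $q>0$ applied to $A^-_\epsilon=\{\underline{\alpha}_\mu<\alpha_0-\epsilon\}$ shows $\nu(A^-_\epsilon)=0$; letting $\epsilon\downarrow 0$ gives $\underline{\alpha}_\mu(x)=\overline{\alpha}_\mu(x)=\alpha_0$ for $\nu$-a.e.\ $x$, so $\nu(E)=\nu(S_\mu)>0$.

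\textbf{Dimensions.} Let $d$ be strictly smaller than the essential supremum in the Hausdorff statement; by definition of $\esssup$, the set $F:=\{x\in E:\liminf_{r\to 0}\log\nu(\ball(x,r))/\log r>d\}$ has $\nu(F)>0$, and on $F$ we have $\nu(\ball(x,r))\le r^d$ for all sufficiently small $r$. Billingsley's lemma, applied to the nonzero measure $\nu|_F$, gives $\dim F\ge d$ and hence $\dim E\ge d$; letting $d$ tend to the essential supremum finishes the Hausdorff bound. The packing bound is obtained in the same way, using the packing-dimension analogue of Billingsley's lemma applied to the set where $\limsup_{r\to 0}\log\nu(\ball(x,r))/\log r$ exceeds the chosen threshold.

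\textbf{Main obstacle.} The hard part will be the concentration step. The quantity $\varphi$ only controls packings whose radii all lie in a single geometric window $(\lambda\delta,\delta]$, whereas the pointwise definitions of $\overline{\alpha}_\mu$ and $\underline{\alpha}_\mu$ supply one good radius per point at scales that depend on the point. Synchronising the two via pigeonhole over the scales $\lambda^n$ (to fix a common window) and the Besicovitch property (to pass from a centred cover to a packing) is the crux, and the signs of $q$ and of $r_i$ versus $\delta$ must be tracked carefully so that the inequalities close up in the correct direction on both sides of $q=0$.
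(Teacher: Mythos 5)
The paper does not actually prove this lemma---it is quoted verbatim from \cite{Ben2}---and your proposal is a correct reconstruction of the argument given in that reference: the same concentration step (window-packings at scales $\lambda^n$, Besicovitch extraction of a disjoint subfamily carrying mass $\ge c/C_\ball$, comparison with the bound $\delta^{-\varphi(q)-\eta}$, and the expansion $\varphi(q)=-\alpha_0 q+o(q)$ with $q<0$ against $\overline{\alpha}_\mu>\alpha_0+\epsilon$ and $q>0$ against $\underline{\alpha}_\mu<\alpha_0-\epsilon$), followed by the standard Billingsley-type density lemmas, whose covering arguments indeed go through in this setting thanks to the Besicovitch property. One label should be corrected: the selection of a subsequence of windows with $\nu(A_{n_k})\ge c>0$ is not a ``summability argument''---Borel--Cantelli only yields $\sum_n\nu(A_n)=\infty$, which is compatible with $\nu(A_n)\to 0$---but follows from Fatou's lemma applied to the complements, giving $\limsup_n\nu(A_n)\ge\nu(\limsup_n A_n)\ge\nu(A^+_\epsilon)>0$, after which your proof closes up as written.
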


Ben Nasr, Bhouri and Heurteaux gave a sufficient condition and a
necessary condition for a valid multifractal formalism, showing that
the knowledge of the so-called Gibbs measure is quite unnecessary.
That is

\begin{thm}\label{t4}  \emph{(see \cite{Ben})}
Let $\mu$ be a probability measure on $\mathbb{X}$ and
$q\in\mathbb{R}$. Suppose that $B'(q)$ exists.

(\romannumeral1) If $\mathscr{H}^{q,B(q)}_\mu(S_\mu)>0$, then
$$\dim X(-B'(q))=\Dim X(-B'(q))=
b^\ast(-B'(q))=B^\ast(-B'(q)).$$

(\romannumeral2) Conversely, if $\dim X(-B'(q))\geq
B^\ast(-B'(q))$, then $b(q)=B(q)$.
\end{thm}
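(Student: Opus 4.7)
The two parts require rather different arguments, so my plan is to dispatch them separately — (ii) first as a short exercise in convex-analytic bookkeeping built on Theorem \ref{t2}, then (i), whose substance is the construction of an auxiliary Frostman-type measure and an appeal to Lemma \ref{l3}. For part (ii), Theorem \ref{t2} gives $\dim X(-B'(q)) \leq b^{\ast}(-B'(q))$, so the hypothesis forces $b^{\ast}(-B'(q)) \geq B^{\ast}(-B'(q))$. The pointwise inequality $b \leq B$ passes to the Legendre transforms as $b^{\ast} \leq B^{\ast}$, which combined with the previous inequality gives equality $b^{\ast}(-B'(q)) = B^{\ast}(-B'(q))$. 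Since $B$ is convex and differentiable at $q$, the infimum defining $B^{\ast}(-B'(q))$ is attained at $y=q$, giving $B^{\ast}(-B'(q)) = B(q) - qB'(q)$; plugging $y=q$ into the definition of $b^{\ast}$ gives $b^{\ast}(-B'(q)) \leq b(q) - qB'(q)$. Chaining these yields $B(q) \leq b(q)$, and combined with $b \leq B$ this closes (ii).

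For part (i), I start by upgrading the hypothesis $\mathscr{H}^{q,B(q)}_\mu(S_\mu) > 0$ to $b(q) = B(q)$: since $\mathscr{H}^{q,t}_\mu(S_\mu)$ is non-increasing in $t$, the hypothesis puts $B(q)$ outside $\{s : \mathscr{H}^{q,s}_\mu(S_\mu) = 0\}$, so $b(q) \geq B(q)$, and $b \leq B$ forces equality. The Legendre calculation of (ii) then gives $b^{\ast}(-B'(q)) = B^{\ast}(-B'(q)) = B(q) - qB'(q)$, and Theorem \ref{t2} supplies the upper bounds $\dim X(-B'(q)) \leq b^{\ast}(-B'(q))$ and $\Dim X(-B'(q)) \leq B^{\ast}(-B'(q))$. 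Since $\dim \leq \Dim$, the proof reduces to the single matching lower bound $\dim X(-B'(q)) \geq B^{\ast}(-B'(q))$.

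This lower bound is the heart of the argument. The positivity of $\mathscr{H}^{q,B(q)}_\mu(S_\mu)$ enables a standard multifractal Frostman-type construction yielding a probability measure $\nu$ carried by $S_\mu$ together with a constant $C>0$ such that
$$\nu(\ball(x,r)) \leq C\,\mu(\ball(x,r))^q\, r^{B(q)} \qquad (x \in S_\mu,\ r \text{ small}).$$
Taking logarithms, dividing by $\log r < 0$, and passing to the $\liminf$ at a point $x \in X(-B'(q))$, where both the $\liminf$ and $\limsup$ of $\log\mu(\ball(x,r))/\log r$ equal $-B'(q)$, one obtains
$$\liminf_{r\to 0}\frac{\log\nu(\ball(x,r))}{\log r} \;\geq\; -qB'(q) + B(q) \;=\; B^{\ast}(-B'(q)),$$
independent of the sign of $q$. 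The main obstacle is then to verify that $(\mu,\nu)$ satisfies the hypotheses of Lemma \ref{l3}, namely $\varphi(0) = 0$ and the existence of $\varphi'(0) = B'(q)$: one bounds the weighted packing sums $\sum_i \mu(\ball_i)^x \nu(\ball_i)$ in terms of pure $\mu$-packing sums via the Frostman inequality and Lemma \ref{l1}, and then reads off the derivative at $0$ using the convexity and differentiability of $B$ at $q$. Once this technical step is in place, Lemma \ref{l3} delivers $\dim X(-B'(q)) \geq B^{\ast}(-B'(q))$, and together with $\dim \leq \Dim$ this completes (i).
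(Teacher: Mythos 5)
First, a point of reference: the paper does not prove Theorem \ref{t4} at all --- it quotes the result from \cite{Ben} --- so your attempt can only be measured against the proofs in the literature: the original argument of \cite{Ben}, and the route through Lemma \ref{l3} of \cite{Ben2}, which is the one you chose. Your part (ii) is correct and is essentially the standard argument: the chain $B^\ast(-B'(q))\le \dim X(-B'(q))\le b^\ast(-B'(q))\le B^\ast(-B'(q))$, the identity $B^\ast(-B'(q))=B(q)-qB'(q)$ coming from convexity and differentiability of $B$ at $q$, and the evaluation $b^\ast(-B'(q))\le b(q)-qB'(q)$ together give $B(q)\le b(q)$, hence equality. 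The only thing you leave unsaid there is that invoking Theorem \ref{t2} presupposes $-B'(q)\in(\underline{a},\overline{a})$.

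In part (i) there is a genuine gap, and it sits precisely at the step you dismiss as technical: verifying for your Frostman pair $(\mu,\nu)$ that $\varphi'(0)$ exists and equals $B'(q)$. Feeding the inequality $\nu(\ball_i)\le C\,\mu(\ball_i)^q r_i^{B(q)}$ into the packing sums and applying Lemma \ref{l1} does bound the mixed sums by pure $\mu$-sums with shifted exponent, but the function this produces is the packing \emph{pre}dimension: one obtains $\varphi(x)\le \tau(q+x)-B(q)$, not $\varphi(x)\le B(q+x)-B(q)$, because $\varphi$ is defined through packings of all of $S_\mu$, which Lemma \ref{l1} ties to $\overline{\mathscr{P}}$ and $\tau$, whereas $B$ incorporates the countable decompositions built into $\mathscr{P}$. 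Your hypothesis yields $b(q)=B(q)$, but in general $\tau(q)>B(q)$ is possible, in which case your upper bound is already strictly positive at $x=0$ and the convexity of $\varphi$ together with differentiability of $B$ at $q$ pins down nothing about $\varphi'(0)$; moreover, existence of $\varphi'(0)$ also needs a matching \emph{lower} bound on $\varphi$ near $0$, for which you offer nothing (the Besicovitch argument only gives $\varphi(0)\ge 0$). Repairing this is the real content of the theorem: either one restricts $\nu$ to pieces of a decomposition of $S_\mu$ on which $\tau_{\mu,E}$ is close to $B$ near $q$ and controls both one-sided derivatives in a limiting argument, or one argues as in \cite{Ben}, showing directly by a covering/counting argument --- in which the existence of $B'(q)$ is the operative hypothesis --- that the Frostman measure gives full mass to $X(-B'(q))$, after which the mass distribution principle yields the $\dim$ and $\Dim$ lower bounds. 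Two smaller omissions: $\mathscr{H}^{q,B(q)}_\mu(S_\mu)$ may be infinite, so normalizing your $\nu$ requires extracting a subset of finite positive measure, which is not automatic for Olsen's measures; and passing from the set-function inequality to the ball inequality requires a recentering with comparable radii whose constants must be tracked separately for $q\ge 0$ and $q<0$.
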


From the second part, when $B'(q)$ exists, $b(q)=B(q)$, known as the
Taylor regularity condition, is the necessary condition for a valid
multifractal formalism.

\section{Some measures on symbolic spaces}

\subsection{The symbolic spaces}{\ }

Let $c\geq 2$, $\mathscr{A}=\{0,1,\cdots,c-1\}$. We consider
$\mathscr{A}^\ast=\bigcup_{n\geq 0}\mathscr{A}^n$, the set of all
finite words on the $c$-letter alphabet $\mathscr{A}$.

If $w=\varepsilon_1\cdots\varepsilon_n$ and
$v=\varepsilon_{n+1}\cdots\varepsilon_{n+p}$, denote by $w\cdot v$
(or simply by $wv$ if it is not ambiguous) the word $\varepsilon_1
\cdots\varepsilon_{n+p}$. With this operation, $\mathscr{A}^\ast$ is a
monoid whose identity element is the empty word $\epsilon$. If a
word $v$ is a prefix of the word $w$, we write $v\prec w$. This
defines an order on $\mathscr{A}^\ast$ and endowed with this order,
$\mathscr{A}^\ast$ becomes a tree whose root is $\epsilon$.  At last,
the length of a word $w$ is denoted by $|w|$. If $w$ and $v$ are two
words, $w\wedge v$ stands for their largest common prefix. It is well
known that the function $d(w,v)=c^{-|w\wedge v|}$ defines an
ultrametric distance on $\mathscr{A}^\ast$.

The completion of $(\mathscr{A}^\ast,d)$ is a compact space which is
the disjoint union of $\mathscr{A}^\ast$ and
$\partial\mathscr{A}^\ast$, whose elements can be viewed as infinite
words. Each finite word $w\in\mathscr{A}^\ast$ defines a cylinder
$[w]=\{x\in\partial\mathscr{A}^\ast|w\prec x\}$, which can also be
viewed as a ball. For a Borel measure $\mu$ on
$\partial\mathscr{A}^\ast$, we simply write $\mu([w])=\mu(w)$. Thus we
identify the Borel measure $\mu$ on $\partial\mathscr{A}^\ast$ with a
mapping from $\mathscr{A}^\ast$ to $[0,+\infty]$ so that for any
$w\in\mathscr{A}^\ast$,
$$\mu(w)=\sum_{j\in\mathscr{A}}\mu(w j).$$

Since the diameters of balls in $\partial\mathscr{A}^\ast$ are
$c^{-n}$, one can compute the function $\tau$ in the following way
according to Lemma 1. That is
$$\sum_{w\in\mathscr{A}^n} \mu(w)^q=c^{n\tau_n(q)},$$
$$\tau(q)=\limsup_{n\rightarrow\infty} \tau_n(q).$$
Also, we denote
$$\underline{\tau}(q)=\liminf_{n\rightarrow\infty} \tau_n(q).$$

Now we define the mixed symbolic spaces. Let $c_1,c_2\geq 2$,
$\mathscr{A}_1=\{0,1,\cdots,c_1-1\}$, $\mathscr{A}_2=\{0,1,\cdots,c_2-1\}$
be two alphabets. Let $(T_k)$ be a sequence of integers such that
$$T_1=1, T_k<T_{k+1} \textrm{ and } \lim_{k\rightarrow\infty}
T_{k+1}/T_k=+\infty.$$

Consider the set of infinite words
$$\partial\mathscr{A}_{1,2}^\ast =
\mathscr{A}_1^{T_2-T_1}\mathscr{A}_2^{T_3-T_2}\cdots=\prod_j
X_j,$$
where

\begin{enumerate}
\item[--] if $T_{2k-1}\leq j< T_{2k}$ for some $k$, $X_j=\mathscr{A}_1$,
\item[--] if $T_{2k}\leq j< T_{2k+1}$ for some $k$, $X_j=\mathscr{A}_2$.
\end{enumerate}

We call $\partial\mathscr{A}_{1,2}^\ast$ the mixed symbolic
space with respect to $\{\mathscr{A}_1,\mathscr{A}_2,(T_k)\}$.

Let $N_n$ be the number of integers $j\leq n$ such that
$X_j=\mathscr{A}_1$. We can immediately get that
$$\liminf_{n\rightarrow\infty}\frac{N_n}{n}=0,$$ and
$$\limsup_{n\rightarrow\infty}\frac{N_n}{n}=1.$$

For any two different elements $w,v\in \partial\mathscr{A}_{1,2}^\ast$
with $|w\wedge v|=n$, we define that
$d(w,v)=c_1^{-N_n}c_2^{-(n-N_n)}$. As previously, this defines an
ultrametric distance.

One sees that when $c_1=c_2=c$, the mixed symbolic space becomes
ordinary symbolic space.

\subsection{Inhomogeneous multinomial measures}{\ }

In \cite{Ben}, the authors presented a measure which has an analytic
function $B$ and a linear function $b$, such that the graph of $B$
is tangent to the graph of $b$ at point (1,0). Here, we wish to show
that there exists a measure $\mu$, such that the Olsen functions
$B$ and $b$ both are analytic and their graphs differ except at two
points where they are tangent; what is more, $B'(\mathbb{R})$ and
$b'(\mathbb{R})$ both are intervals of positive length.

We first work on the symbolic spaces and begin with the following
result, giving the so called inhomogeneous multinomial measures.

\begin{thm}\label{t5}
Let $\mathscr{A}_1=\{0,1,\cdots,c_1-1\}$,
$\mathscr{A}_2=\{0,1,\cdots,c_2-1\}$ and let $(T_k)$ be a sequence of
integers such that
$$T_1=1, T_k<T_{k+1} \textrm{ and } \lim_{k\rightarrow\infty}
T_{k+1}/T_k=+\infty.$$ Let $a_i, b_j\in
(0,1)$($i=1,\cdots,c_1,j=1,\cdots,c_2$) satisfying
$a_1+\cdots+a_{c_1}=b_1+\cdots+b_{c_2}=1$. There exists a probability
measure $\mu$ on $\partial\mathscr{A}_{1,2}^\ast$ such that for every
$q\in \mathbb{R}$,
$$B(q)=\sup\{\log_{c_1}(a_1^q+\cdots+a_{c_1}^q),
\log_{c_2}(b_1^q+\cdots+b_{c_2}^q)\},$$
$$b(q)=\inf\{\log_{c_1}(a_1^q+\cdots+a_{c_1}^q),
\log_{c_2}(b_1^q+\cdots+b_{c_2}^q)\}.$$
\end{thm}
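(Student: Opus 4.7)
The plan is to take $\mu$ to be the inhomogeneous product measure that places $(a_1,\cdots,a_{c_1})$ at each coordinate lying in an $\mathscr{A}_1$-block and $(b_1,\cdots,b_{c_2})$ at each coordinate lying in an $\mathscr{A}_2$-block; equivalently, $\mu([w])=\prod_{j=1}^{|w|}p_j(\varepsilon_j)$ with the obvious choice of $p_j$. Writing $\alpha(q)=\sum_i a_i^q$ and $\beta(q)=\sum_j b_j^q$, the multiplicative structure immediately gives $\sum_{|w|=n}\mu(w)^q=\alpha(q)^{N_n}\beta(q)^{n-N_n}$, and every level-$n$ cylinder is a ball of radius $\delta_n=c_1^{-N_n}c_2^{-(n-N_n)}$.

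For the function $B$, I apply Lemma \ref{l1}: the quotient $\tau_n(q)=\log\bigl(\sum_{|w|=n}\mu(w)^q\bigr)/(-\log\delta_n)$ is a continuous (and, except at $q=1$, strictly monotone) M\"obius function of $N_n/n$ with endpoint values $\log_{c_2}\beta(q)$ at $0$ and $\log_{c_1}\alpha(q)$ at $1$. The hypothesis $T_{k+1}/T_k\to\infty$ is used here to force $N_n/n$ to have both $0$ and $1$ as limit points, so $\tau(q)=\max\{\log_{c_1}\alpha(q),\log_{c_2}\beta(q)\}$, and the inequality $B\le\tau$ yields the upper bound. For the matching lower bound, I fix $s<\max\{\log_{c_1}\alpha(q),\log_{c_2}\beta(q)\}$ and pack a cylinder $[w]$ by its level-$n$ subcylinders; the resulting sum is $\mu(w)^q r(w)^s(\alpha c_1^{-s})^M(\beta c_2^{-s})^{m-M}$ with $m=n-|w|$, $M=N_n-N_{|w|}$, and since at least one of $\alpha c_1^{-s},\beta c_2^{-s}$ exceeds $1$ with unbounded exponent along a suitable subsequence, $\overline{\mathscr{P}}_\mu^{q,s}([w])=\infty$. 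A Baire category/density argument in the ultrametric setting then upgrades this to $\mathscr{P}_\mu^{q,s}(S_\mu)=\infty$, giving $B(q)\ge s$.

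For the function $b$, I exploit the ultrametric structure to replace any cover of $S_\mu$ by cylinders with an antichain cover of no larger $(q,s)$-energy. For $s<\min\{\log_{c_1}\alpha(q),\log_{c_2}\beta(q)\}$, passing from a cylinder to its children multiplies the energy by $\alpha c_1^{-s}$ or $\beta c_2^{-s}$, both strictly greater than $1$; any finite antichain cover is obtained from the trivial cover $\{\epsilon\}$ (of energy $1$) by iterating this refinement, hence has energy $\ge 1$, giving $b(q)\ge s$. Conversely, for $s>\min\{\log_{c_1}\alpha(q),\log_{c_2}\beta(q)\}$, say $s>\log_{c_2}\beta(q)$, the level-$n_k$ cover along a subsequence with $N_{n_k}/n_k\to 0$ has energy $(\alpha c_1^{-s})^{N_{n_k}}(\beta c_2^{-s})^{n_k-N_{n_k}}\to 0$ (the factor $\beta c_2^{-s}<1$ dominates), so $b(q)\le s$. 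The delicate step in the whole argument will be the Baire/density upgrade from $\overline{\mathscr{P}}_\mu^{q,s}([w])=\infty$ to $\mathscr{P}_\mu^{q,s}(S_\mu)=\infty$: given a decomposition $S_\mu=\bigcup E_i$ one must locate an $E_i$ dense in a cylinder and construct packings centered in $E_i$ that realize the blow-up of the packing premeasure of that cylinder; minor bookkeeping is also needed to handle the fact that balls coincide with cylinders only up to a range of admissible radii.
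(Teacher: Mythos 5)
Your proposal is correct: you take the same measure $\mu$ and the same computation of $\tau_n$, $\tau(q)$ and $\underline{\tau}(q)$ as the paper, and the upper bounds are identical ($B\le\tau$ via Lemma~\ref{l1}, and $b\le\underline{\tau}$ via level-$n_k$ covers, where the supremum over subsets $F\subset S_\mu$ in the definition of $\mathscr{H}^{q,t}_\mu$ is handled trivially by the subfamily of level-$n_k$ cylinders meeting $F$; cf.\ Remark~\ref{r1}). But your lower bounds take a genuinely different route. The paper constructs an auxiliary probability measure $\nu$ as a weak limit of the consistent level-$n$ measures $w\mapsto\mu(w)^q\delta_n^{\tau_n(q)}$ (Lemma~\ref{l6}) and then derives $\mathscr{P}^{q,\tau(q)-\varepsilon}_\mu(S_\mu)>0$ by a Besicovitch extraction and $\mathscr{H}^{q,\underline{\tau}(q)}_\mu(S_\mu)>0$ by a mass-distribution estimate (Lemma~\ref{l7}). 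You dispense with the auxiliary measure entirely: your tree-cut induction for $b$ (every finite antichain partition of the boundary refines from $\{\epsilon\}$, each split multiplying the energy by $\alpha(q)c_1^{-s}$ or $\beta(q)c_2^{-s}>1$) is the measure-free form of the paper's inequality $\nu(w)\le\mu(w)^q\delta_n^{\underline{\tau}(q)}$, exploiting directly the exact multiplicativity that, in the paper, is what makes the $\nu_n$ consistent; and for $B$ you show $\overline{\mathscr{P}}^{q,s}_\mu([w]\cap S_\mu)=\infty$ on every cylinder and upgrade by Baire category. That upgrade, which you rightly single out as the delicate step, does go through here: $S_\mu=\partial\mathscr{A}^\ast_{1,2}$ is compact with full support (all $a_i,b_j>0$), cylinders form a base of the topology, and in an ultrametric space every point of a ball is a center, so a packing centered in $\overline{E_i}$ can be recentered in $E_i$ with the very same balls; hence $\overline{\mathscr{P}}^{q,s}_\mu(E_i)=\overline{\mathscr{P}}^{q,s}_\mu(\overline{E_i})\ge\overline{\mathscr{P}}^{q,s}_\mu([w]\cap S_\mu)=\infty$ for the index $i$ that Baire makes dense in some cylinder $[w]$, and $\mathscr{P}^{q,s}_\mu(S_\mu)=\infty$ follows. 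As for what each route buys: yours is more elementary and self-contained, avoiding weak limits and the Besicovitch constant, but it leans on compactness (to reduce countable antichain covers to finite ones and to run Baire) and on full support of $\mu$; the paper's auxiliary-measure mechanism is the standard one of \cite{Ben2} and survives when those features fail, which is why it is the template used there. Your two flagged pieces of bookkeeping are indeed only bookkeeping: a level-$n$ cylinder equals the ball $\ball(x,r)$ exactly for $r\in(\delta_n,\delta_{n-1}]$ with $\delta_n=c_1^{-N_n}c_2^{-(n-N_n)}$, which distorts the weight $r^s$ by at most a factor $\max(c_1,c_2)^{|s|}$ and moves none of the critical exponents.
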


To avoid tedious notations, we write the proof with $c_1=c_2=3$. The
reader will realize that the general case can be handled with minor
modifications.

Now $\mathscr{A}_1=\mathscr{A}_2=\mathscr{A}=\{0,1,2\}$. We define the
measure $\mu$ on $\partial\mathscr{A}^\ast$ such that for every
cylinder $[\varepsilon_1 \varepsilon_2\cdots\varepsilon_n]$, one has
$$\mu(\varepsilon_1\cdots\varepsilon_n)=\prod_{j=1}^n p_{j},$$
where

\begin{enumerate}
\item[--] if $T_{2k-1}\leq j< T_{2k}$ for some $k$, $p_j=a_{\varepsilon_j+1}$,
\item[--] if $T_{2k}\leq j< T_{2k+1}$ for some $k$, $p_j=b_{\varepsilon_j+1}$.
\end{enumerate}

To prove the theorem, we first compute the $\tau$ function with
respect to the measure $\mu$.

If $N_n$ is the number of integers $j\leq n$ such that $p_j\in
\{a_1,a_2,a_3\}$, we can easily deduce that
$$\tau_n(q)=\frac{N_n}{n}\log_3(a_1^q+a_2^q+a_3^q)+
(1-\frac{N_n}{n})\log_3(b_1^q+b_2^q+b_3^q).$$ By using that
$\liminf_{n\rightarrow\infty}\frac{N_n}{n}=0$ and
$\limsup_{n\rightarrow\infty}\frac{N_n}{n}=1$, we can conclude
$$\tau(q)=\sup\{\log_3(a_1^q+a_2^q+a_3^q),\log_3(b_1^q+b_2^q+b_3^q)\},$$
$$\underline{\tau}(q)=\inf\{\log_3(a_1^q+a_2^q+a_3^q),
\log_3(b_1^q+b_2^q+b_3^q)\}.$$

\begin{lemma}\label{l6}
Let $\mathscr{A}=\{0,1,2\}$. We can construct a probability
measure $\nu$ on $\partial\mathscr{A}^\ast$ and a
subsequence of integers $(n_k)_{k\geq 1}$, such that
$$\nu(w)\leq \mu(w)^q 3^{-n\underline{\tau}(q)},\textrm{ if } w\in
  \mathscr{A}^n,$$
and for every $\varepsilon> 0$,
$$\nu(w)\leq \mu(w)^q 3^{-n_k(\tau(q)-\varepsilon)}, \textrm{ if } w\in
\mathscr{A}^{n_k} \textrm{ with $k$ large}.$$

\end{lemma}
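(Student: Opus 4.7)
The plan is to build $\nu$ as a $q$-deformed block-Bernoulli product measure on $\partial\mathscr{A}^\ast$: letting $S_a(q)=a_1^q+a_2^q+a_3^q$ and $S_b(q)=b_1^q+b_2^q+b_3^q$, assign to a letter $i-1$ at position $j$ with $T_{2k-1}\le j<T_{2k}$ the weight $a_i^q/S_a(q)$, and at position $j$ with $T_{2k}\le j<T_{2k+1}$ the weight $b_i^q/S_b(q)$. Each factor sums to $1$ over $\mathscr{A}$, so this defines a Borel probability measure $\nu$ on $\partial\mathscr{A}^\ast$. The tilting is the natural one: it is arranged precisely so that the density $\nu(w)/\mu(w)^q$ is equal to the inverse of the partition function truncated at level $n$.

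Indeed, a direct computation gives, for $w\in \mathscr{A}^n$,
$$\nu(w) = \frac{\mu(w)^q}{S_a(q)^{N_n}S_b(q)^{n-N_n}} = \mu(w)^q\,3^{-n\tau_n(q)},$$
using the formula $\tau_n(q)=\tfrac{N_n}{n}\log_3 S_a(q)+(1-\tfrac{N_n}{n})\log_3 S_b(q)$ already established in the text. Since $\tau_n(q)$ is a convex combination of $\log_3 S_a(q)$ and $\log_3 S_b(q)$, it is always bounded below by $\underline{\tau}(q)=\inf\{\log_3 S_a(q),\log_3 S_b(q)\}$, which yields the first claim $\nu(w)\le \mu(w)^q 3^{-n\underline{\tau}(q)}$ for every $n$ and every $w\in\mathscr{A}^n$.

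For the subsequence, use the hypothesis $T_{k+1}/T_k\to+\infty$ to realize the upper limit of $\tau_n(q)$ along an explicit sequence. Suppose first that $\log_3 S_a(q)\ge \log_3 S_b(q)$, so $\tau(q)=\log_3 S_a(q)$. Take $n_k=T_{2k}$: the bound $N_{n_k}\ge T_{2k}-T_{2k-1}$ gives $N_{n_k}/n_k\ge 1-T_{2k-1}/T_{2k}\to 1$, so that $\tau_{n_k}(q)\to\tau(q)$. Given $\varepsilon>0$, this forces $\tau_{n_k}(q)>\tau(q)-\varepsilon$ for all $k$ large enough (depending on $\varepsilon$ but not on $w$), and the identity above then yields $\nu(w)\le \mu(w)^q 3^{-n_k(\tau(q)-\varepsilon)}$. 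The symmetric case $\log_3 S_a(q)<\log_3 S_b(q)$ is handled identically with $n_k=T_{2k+1}$.

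The proof is therefore essentially bookkeeping once the correct tilt is chosen. I do not expect a real obstacle: the only possibly nontrivial step is recognising that the exponent produced by the tilted product is exactly $\tau_n(q)$, after which the first bound drops out by convexity and the second is precisely the statement that $(n_k)$ can be chosen to realise the $\limsup$ in the definition of $\tau(q)$, which the fast growth of $(T_k)$ provides explicitly.
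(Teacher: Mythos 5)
Your proof is correct, and it hinges on exactly the same identity as the paper's, namely $\nu(w)=\mu(w)^q\,3^{-n\tau_n(q)}$ for all $w\in\mathscr{A}^n$, but you reach it by a genuinely more direct route. The paper never writes $\nu$ down in closed form: it defines, for each $n$, an auxiliary probability measure $\nu_n$ satisfying $\nu_n(w)=\mu(w)^q 3^{-n\tau_n(q)}$ on $\mathscr{A}^n$ (extended below level $n$ by uniform subdivision), verifies the consistency relation $\nu_{n+p}(w)=\nu_n(w)$ --- a computation that, like your product formula, rests on the multiplicative structure of $\mu$ --- and then obtains $\nu$ as a weak limit of $(\nu_{n_k})$ along a subsequence chosen abstractly so that $\tau_{n_k}(q)\to\tau(q)$. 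You instead identify the measure at once as the $q$-tilted inhomogeneous Bernoulli product with letter weights $a_i^q/S_a(q)$ and $b_i^q/S_b(q)$, so the cylinder formula is immediate and both the consistency check and the weak-$\ast$ compactness step are dispensed with; since a Borel measure on $\partial\mathscr{A}^\ast$ is determined by its values on cylinders, your $\nu$ is literally the same measure as the paper's. Your explicit subsequence is also sound: with $n_k=T_{2k}$ one indeed has $N_{n_k}/n_k\ge 1-T_{2k-1}/T_{2k}\to 1$ (and symmetrically with $n_k=T_{2k+1}$ in the other case), so $\tau_{n_k}(q)\to\tau(q)$; the paper simply invokes the definition of $\limsup$ to produce such a subsequence, which is shorter and independent of the structure of $(T_k)$, but your construction costs nothing since the growth hypothesis on $(T_k)$ was already used to compute $\tau$ and $\underline{\tau}$, and both proofs conclude identically from $\underline{\tau}(q)\le\tau_n(q)$ and $\tau_{n_k}(q)\ge\tau(q)-\varepsilon$. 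What your approach buys is concreteness and economy; what the paper's buys is conformity with the general auxiliary-measure template of Ben Nasr and Peyri\`ere, which is the form of the argument that adapts beyond explicitly multiplicative measures.
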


\begin{proof}
Define a mapping $\nu_n$ from $\mathscr{A}^\ast$ to $[0,+\infty]$
such that for any $x\in\mathscr{A}^m$,
$$\nu_n(x)=\sum_{w\in \mathscr{A}^n,x\prec w}\mu(w)^q
3^{-n\tau_n(q)},\textrm{ if }m\leq n,$$
$$\nu_n(x)=\frac{1}{c^{m-n}}\sum_{w\in \mathscr{A}^n,w\prec x}\mu(w)^q
3^{-n\tau_n(q)},\textrm{ if }m>n.$$

Then it is easy to see that $\nu_n$ is a probability measure on
$\partial\mathscr{A}^\ast$ and for every $w\in\mathscr{A}^n$,
$$\nu_n(w)=\mu(w)^q 3^{-n\tau_n(q)}.$$

Let $w\in \mathscr{A}^n$ and $p>0$. We have
$$\nu_{n+p}(w)=\sum_{x\in \mathscr{A}^p}\nu_{n+p}(w x)=\sum_{x\in
  \mathscr{A}^p}\mu(w x)^q 3^{-(n+p)\tau_{n+p}(q)}.$$

Meanwhile,
$$\frac{\sum_{x\in
  \mathscr{A}^p}\mu(w x)^q}{\mu(w)^q}=\frac{\sum_{z\in
  \mathscr{A}^{n+p}}\mu(z)^q}{\sum_{w\in
  \mathscr{A}^n}\mu(w)^q}=\frac{3^{(n+p)\tau_{n+p}(q)}}{3^{n\tau_n(q)}}.$$

We can conclude that
$$\nu_{n+p}(w)=\mu(w)^q\frac{3^{(n+p)\tau_{n+p}(q)}}{3^{n\tau_n(q)}}
3^{-(n+p)\tau_{n+p}(q)}=\mu(w)^q
3^{-n\tau_n(q)}=\nu_n(w).$$

Let $(n_k)_{k\geq 1}$ be a subsequence such that
$\tau(q)=\lim_{k\rightarrow +\infty}\tau_{n_k}(q)$ and choose $\nu$ as
a weak limit of a subsequence of $\nu_{n_k}$. Observing that
$\underline{\tau}(q)\leq\tau_n(q)$, we obtain that
$$\forall n\geq 1,\forall w\in\mathscr{A}^n, \nu(w)\leq \mu(w)^q
3^{-n\underline{\tau}(q)}.$$

If $\varepsilon>0$
and if $k$ is sufficiently large, we have
$\tau(q)-\varepsilon\leq\tau_{n_k}(q)$. Thus for any
$w\in\mathscr{A}^{n_k}$, $\nu_{n_k}(w)\leq \mu(w)^q
3^{-{n_k}(\tau(q)-\varepsilon)}$, and finally we get $\nu(w)\leq
\mu(w)^q 3^{-n_k(\tau(q)-\varepsilon)}$.

\end{proof}

\begin{lemma}\label{l7}
For any fixed $q$, one has
$$\mathscr{P}^{q,\tau(q)-\varepsilon}_\mu(S_\mu)> 0, \textrm{ for
  every }\varepsilon>0,$$
$$\mathscr{H}^{q,\underline{\tau}(q)+\varepsilon}_\mu(S_\mu)<
+\infty,\textrm{ for every }\varepsilon>0,$$
and
$$\mathscr{H}^{q,\underline{\tau}(q)}_\mu(S_\mu)> 0.$$
\end{lemma}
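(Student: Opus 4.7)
The plan is to deduce all three estimates from the auxiliary measure $\nu$ constructed in Lemma \ref{l6}, together with the fact that $\partial\mathscr{A}^\ast$ is ultrametric so every ball is a cylinder: $B(x,r)=[x|_n]$ exactly when $3^{-n}<r\le 3^{-(n-1)}$, so $r^s$ and $3^{-ns}$ differ by at most a factor $3^{|s|}$ for any exponent $s$.

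For the packing lower bound $\mathscr{P}^{q,\tau(q)-\varepsilon}_\mu(S_\mu)>0$, I would fix $\delta>0$ and an arbitrary subset $E\subset S_\mu$, then choose $k$ in the Lemma \ref{l6} subsequence so that $3^{-(n_k-1)}<\delta$ and $\nu(w)\le \mu(w)^q 3^{-n_k(\tau(q)-\varepsilon)}$ for every $w\in\mathscr{A}^{n_k}$. For each cylinder $[w]$ of length $n_k$ meeting $E$, pick $x_w\in [w]\cap E$ and use the ball of radius $r_k\in(3^{-n_k},3^{-(n_k-1)}]$, which coincides with $[w]$ and is therefore disjoint from the other chosen balls, giving a valid $\delta$-packing of $E$. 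Inverting the bound of Lemma \ref{l6} as $\mu(w)^q\ge \nu(w)\,3^{n_k(\tau(q)-\varepsilon)}$ and using $r_k^{\tau(q)-\varepsilon}\cdot 3^{n_k(\tau(q)-\varepsilon)}\ge 3^{-|\tau(q)-\varepsilon|}$, one gets $\sum_w r_k^{\tau(q)-\varepsilon}\mu(w)^q\ge C\sum_w \nu(w)\ge C\nu(E)$ by subadditivity. Hence $\overline{\mathscr{P}}^{q,\tau(q)-\varepsilon}_\mu(E)\ge C\nu(E)$, and for any countable cover $S_\mu\subset\bigcup E_i$ one concludes $\mathscr{P}^{q,\tau(q)-\varepsilon}_\mu(S_\mu)\ge C\sum\nu(E_i)\ge C\nu(S_\mu)=C>0$.

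The Hausdorff upper bound $\mathscr{H}^{q,\underline{\tau}(q)+\varepsilon}_\mu(S_\mu)<+\infty$ is direct from the definition of $\underline{\tau}$ and needs no auxiliary measure. For any $F\subset S_\mu$ and $\delta>0$, pick $n$ with $3^{-(n-1)}<\delta$ and $\tau_n(q)<\underline{\tau}(q)+\varepsilon/2$ (available infinitely often since $\underline{\tau}(q)=\liminf_n \tau_n(q)$). Cover $F$ by the cylinders at level $n$ meeting $F$, each viewed as a ball of some radius $r\in(3^{-n},3^{-(n-1)}]$; this is a centered $\delta$-cover, and with $s=\underline{\tau}(q)+\varepsilon$ the sum is bounded by $3^{|s|}\cdot 3^{-ns}\sum_{w\in\mathscr{A}^n}\mu(w)^q=3^{|s|}\cdot 3^{n(\tau_n(q)-s)}\le 3^{|s|}$. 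This $\delta$-uniform bound yields $\overline{\mathscr{H}}^{q,s}_\mu(F)\le 3^{|s|}$, hence $\mathscr{H}^{q,s}_\mu(S_\mu)=\sup_{F\subset S_\mu}\overline{\mathscr{H}}^{q,s}_\mu(F)\le 3^{|s|}<+\infty$.

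Finally, the same inversion as in the packing case, now applied with the universal bound $\nu(w)\le \mu(w)^q\,3^{-n\underline{\tau}(q)}$ valid at every level, gives $\mathscr{H}^{q,\underline{\tau}(q)}_\mu(S_\mu)>0$: for any centered $\delta$-cover of $S_\mu$ by balls $B(x_i,r_i)=[w_i]$ with $|w_i|=n_i$, the comparison $r_i^{\underline{\tau}(q)}\cdot 3^{n_i\underline{\tau}(q)}\ge 3^{-|\underline{\tau}(q)|}$ and subadditivity of $\nu$ give $\sum r_i^{\underline{\tau}(q)}\mu(w_i)^q\ge 3^{-|\underline{\tau}(q)|}\sum\nu(w_i)\ge 3^{-|\underline{\tau}(q)|}\nu(S_\mu)=3^{-|\underline{\tau}(q)|}$, a lower bound uniform in $\delta$ that passes to $\overline{\mathscr{H}}^{q,\underline{\tau}(q)}_\mu(S_\mu)$ and hence to $\mathscr{H}^{q,\underline{\tau}(q)}_\mu(S_\mu)$. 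No genuine obstacle arises; the only careful point in all three parts is tracking the multiplicative constants $3^{\pm|s|}$ that appear when replacing a ball radius by the cylinder scale $3^{-n_i}$.
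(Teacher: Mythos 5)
Your proposal is correct, and it follows the paper's strategy in all essentials: each bound is extracted from the auxiliary measure $\nu$ of Lemma \ref{l6} by comparing $\nu(w)$ with $\mu(w)^q 3^{-ns}$ over cylinder coverings and packings, using the ultrametric fact that every ball is a cylinder. Your second and third inequalities are the paper's arguments up to bookkeeping: the paper works directly with radii $3^{-n}$ and so never sees your constants $3^{\pm|s|}$, which you track explicitly and, if anything, more carefully (and your observation that the subset $F$ plays no real role is exactly the paper's Remark \ref{r1}). The one genuine divergence is in the packing estimate: the paper allows the word $w_x$ to have a length $n_x$ depending on the point $x$, so the resulting cylinders can be nested rather than disjoint, and it invokes the Besicovitch property to extract $C_\ball$ disjoint countable subfamilies, losing a factor $1/C_\ball$; you instead fix a single level $n_k$ from the subsequence of Lemma \ref{l6} for each $\delta$, so that the level-$n_k$ cylinders meeting $E$ are automatically pairwise disjoint and already constitute a $\delta$-packing, with no covering theorem needed at all. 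In the symbolic space your route is the more elementary one and yields a cleaner constant; the paper's formulation buys portability, since the same proof goes through verbatim in any metric space with the Besicovitch property and tolerates point-dependent scales, which is what one would need outside the ultrametric setting. Two cosmetic points: for arbitrary (possibly non-Borel) $E\subset S_\mu$ you should write the outer measure $\nu^\ast(E)$ rather than $\nu(E)$, as the paper does; and in $\mathscr{P}^{q,t}_\mu(S_\mu)=\inf_{S_\mu\subset\cup_i E_i}\sum_i\overline{\mathscr{P}}^{q,t}_\mu(E_i)$ the sets $E_i$ need not lie inside $S_\mu$, which your argument handles after replacing $E_i$ by $E_i\cap S_\mu$ and using monotonicity of $\overline{\mathscr{P}}^{q,t}_\mu$.
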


\begin{proof}
To prove the first inequality, one takes any family of $\{E_i\}$
such that $S_\mu=\cup E_i$ and for each $i$ one computes
$\overline{\mathscr{P}}^{q,\tau(q)-\varepsilon}_\mu(E_i)$.

For any $\delta>0$, for any $x\in S_\mu$, there exists an integer
$n_x$ and a word $w_x\in \mathscr{A}^{n_x}$ such that
$w_x\prec x$, $3^{-n_x}<\delta$ and $\nu(w_x)\leq
\mu(w_x)^q 3^{-n_x (\tau(q)-\varepsilon)}$. When we identify a
finite word with a cylinder as well as a ball, by Besicovitch
property, we can extract from $\{w_x\}_{x\in E_i}$ $C_\ball$
countable families $\{w_{j,k}\}_{1\leq j\leq C_\ball, k\geq 1}$ such
that $\cup_{j,k}w_{j,k}\supseteq E_i$ and for any $j$,
$\{w_{j,k}\}_{k\geq
  1}$ is a $\delta$-packing of $E_i$.

Then one gets
$$\nu^\ast(E_i)\leq \sum_{j,k}\nu^\ast(w_{j,k})\leq
\sum_{j,k}\nu(w_{j,k})\leq \sum_{j,k} \mu(w_{j,k})^q
3^{-|w_{j,k}|(\tau(q)-\varepsilon)},$$ where $\nu^\ast$
stands for the outer measure of $\nu$.

So there exists $j$ such that $\sum_k \mu(w_{j,k})^q
3^{-|w_{j,k}|(\tau(q)-\varepsilon)}\geq
\frac{1}{C_\ball}\nu^\ast(E_i)$. Thus
$$\sum_i\overline{\mathscr{P}}^{q,\tau(q)-\varepsilon}_\mu(E_i)\geq
\frac{1}{C_\ball}\sum_i \nu^\ast(E_i)\geq
\frac{1}{C_\ball}\nu^\ast(S_\mu),$$ which implies
$$\mathscr{P}^{q,\tau(q)-\varepsilon}_\mu(S_\mu)\geq
\frac{1}{C_\ball}\nu^\ast(S_\mu)>0.$$

To prove the second inequality, one notices that for
$\varepsilon>0$, there exists a subsequence $\{n_k\}$ such that
$\tau_{n_k}(q)<\underline{\tau}(q)+\varepsilon$, for every $k\geq
1$. Take any subset $F\subset S_\mu$, and we choose the natural
centered $3^{-n_k}$-covering of $F$, which is a set of elements
belonging in $\mathscr{A}^{n_k}$. Now
$$\overline{\mathscr{H}}^{q,\underline{\tau}(q)+\varepsilon}_{\mu,3^{-n_k}}(F)\leq
\sum_{w\in \mathscr{A}^{n_k}} \mu(w)^q
3^{-n_k(\underline{\tau}(q)+\varepsilon)}\leq \sum_{w\in
  \mathscr{A}^{n_k}} \mu(w)^q 3^{-n_k\tau_{n_k}(q)}=1,$$
which means
$$\overline{\mathscr{H}}^{q,\underline{\tau}(q)+\varepsilon}_\mu(F)\leq
1,$$
and
$$\mathscr{H}^{q,\underline{\tau}(q)+\varepsilon}_\mu(S_\mu)\leq 1.$$

To prove the last inequality, it is sufficient to show that
$\overline{\mathscr{H}}^{q,\underline{\tau}(q)}_\mu(S_\mu)> 0$. Let
$m\geq 1$, for any $3^{-m}$-covering $\mathscr{B}_m=\{w_i\}_i$ of
$S_\mu$, we have
\begin{eqnarray}
\sum_{w_i\in \mathscr{B}_m} \mu(w_i)^q
3^{-|w_i|\underline{\tau}(q)}&=&\sum_{n\geq m}\sum_{w_i\in
\mathscr{A}^n,\, w_i\in \mathscr{B}_m}\mu(w_i)^q
3^{-n\underline{\tau}(q)}\nonumber\\
&\geq & \sum_{n\geq m}\sum_{w_i\in \mathscr{A}^n,\, w_i\in
\mathscr{B}_m} \nu(w)\nonumber\\
&=&\sum_{w_i\in
\mathscr{B}_m}\nu(w_i)\geq\nu^\ast(S_\mu)\nonumber.
\end{eqnarray}
This implies
$$\overline{\mathscr{H}}^{q,\underline{\tau}(q)}_{\mu,3^{-m}}
(S_\mu)\geq\nu^\ast(S_\mu),$$
which yields
$$\overline{\mathscr{H}}^{q,\underline{\tau}(q)}_\mu(S_\mu)>0.$$
So the proof is finished.

\end{proof}

\begin{remark}\label{r1}
  In fact, here one easily checks that in the symbolic space,
  $\overline{\mathscr{H}}$ and $\mathscr{H}$ are the same. So in the
  proof of the second inequality, one does not need to introduce the
  subset $F$.
\end{remark}

Now we are back to compute the functions $B$ and $b$. To obtain the
equalities $B(q)=\tau(q)$, $b(q)=\underline{\tau}(q)$, it is now
sufficient to prove that $\tau(q)\leq B(q)$, $\underline{\tau}(q)\leq
b(q)$ and $\underline{\tau}(q)\geq b(q)$, which are just consequences
of Lemma \ref{l7}.

So Theorem \ref{t5} has been done for $c_1=c_2=3$. In general case,
one can compute
$$\tau_n(q)=\frac{\frac{N_n}{n}\log(a_1^q+\cdots+a_{c_1}^q)}
{\frac{N_n}{n}\log c_1+(1-\frac{N_n}{n})\log c_2}+
\frac{(1-\frac{N_n}{n})\log(b_1^q+\cdots+b_{c_2}^q)}
{\frac{N_n}{n}\log c_1+(1-\frac{N_n}{n})\log c_2},$$
which follows
$$\tau(q)=\sup\{\log_{c_1}(a_1^q+\cdots+a_{c_1}^q),\log_{c_2}(b_1^q+
\cdots+b_{c_2}^q)\},$$
$$\underline{\tau}(q)=\inf\{\log_{c_1}(a_1^q+\cdots+a_{c_1}^q),
\log_{c_2}(b_1^q+\cdots+b_{c_2}^q)\}.$$

Using the very same method as above, and replacing the metric $3^{-n}$
by $c_1^{-N_n}c_2^{-(n-N_n)}$, one checks that Theorem \ref{t5} is
valid.

\section{Measures with analytic Olsen's functions}

The purpose of this section is to find a measure whose Olsen's
functions $B$ and $b$ both are analytic and their graphs are tangent
to each other at two special points, which means, the graphs of the
two functions have four intersections, counted with their orders. So
we recall some results about the number of zeros of generalized
Dirichlet polynomial by G.J.O. Jameson.

\begin{definition}
Let $f:\mathbb{R}\rightarrow\mathbb{R}$ be an analytic function. $x_0$
is called a zero of $f$ of order $k$ ($k\geq 0$) if
$$f(x_0)=f'(x_0)=\cdots=f^{(k-1)}(x_0)=0, \textrm{and }
f^{(k)}(x_0)\neq 0.$$

Let $f_1,f_2:\mathbb{R}\rightarrow\mathbb{R}$ be two analytic
functions. $(x_0,y_0)$ is called an intersection of the graphs of
$f_1$ and $f_2$, of order $k$ ($k\geq 0$), if $x_0$ is the zero of
function $f_1-f_2$ of order $k$, and $y_0=f_1(x_0)$.
\end{definition}

\begin{definition}
A (generalized) Dirichlet polynomial is a function of the form
$$F(x)=\sum_{j=1}^n a_j e^{p_j x}, x\in \mathbb{R},$$ where the $p_j$
can be any real numbers (listed in descending order).

The length of a Dirichlet polynomial is the number of non-zero terms
in its defining expression.

Among Dirichlet polynomials, a special case is when each $a_j$ is
either 1 or -1, with equally many of each occurring. We call this type
bipartite.
\end{definition}
With the same notations as above, Jameson proved the next theorem:

\begin{thm}\label{t8}  \emph{(see \cite{Jam})}
If $(a_j)$ is bipartite of length $2n$, then the number of zeros of
$F$ (counted with their orders) is not greater than $n$.
\end{thm}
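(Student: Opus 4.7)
The plan is to argue by induction on $n$, using the Laguerre-type differential operator $L_c[F](x) := F'(x) - c\,F(x)$. The identity $L_c[F](x) = e^{cx}\frac{d}{dx}\bigl(F(x)e^{-cx}\bigr)$ shows that, via Rolle's theorem applied to $G(x) := F(x)e^{-cx}$ (whose real zeros coincide in location and multiplicity with those of $F$ since $e^{-cx}>0$), a real zero of $L_c[F]$ lies between any two consecutive real zeros of $F$. Writing $Z(F)$ for the number of real zeros of $F$ counted with multiplicity, this yields $Z(L_c[F]) \geq Z(F) - 1$, with an additional Rolle zero ``at infinity'' whenever $G(x)\to 0$ as $x\to+\infty$ (i.e.\ $c>\max_j p_j$) or as $x\to-\infty$ (i.e.\ $c<\min_j p_j$). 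Crucially, when $c = p_k$ the $k$-th term of $L_{p_k}[F]$ is annihilated, so the length drops from $2n$ to $2n-1$; the remaining coefficients $a_j(p_j-p_k)$ retain the sign of $a_j$ for $p_j>p_k$ and flip it for $p_j<p_k$.

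The base case $n = 1$ is immediate, as $F = \pm(e^{p_1 x}-e^{p_2 x})$ has exactly one real zero. For the inductive step, starting from a bipartite $F$ of length $2n$, the idea is to alternate length-reducing applications $L_{p_k}$ with length-preserving boundary-gain applications $L_c$ (with $c$ chosen slightly outside the current range of exponents). Each boundary-gain application contributes a free Rolle zero at infinity that offsets the Rolle loss from a subsequent length-reducing step. Organising these operations into a pair that removes two terms while losing at most one zero, and arranging the resulting $2n-2$ coefficients into a bipartite sequence, would reduce the problem to the inductive hypothesis and yield $Z(F) \leq 1 + (n-1) = n$.

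The principal obstacle is the combinatorial bookkeeping of sign flips. A single $L_{p_k}$ always outputs a polynomial of odd length, which cannot itself be bipartite, and the sign pattern of $L_{p_k}[F]$ depends delicately on the position of $k$ relative to the other exponents. I expect it will be necessary to strengthen the inductive statement to a class of Dirichlet polynomials with coefficients in $\{+1,-1\}$ (say with $n_+$ positive and $n_-$ negative terms), bounding $Z$ by a function of $(n_+,n_-)$ that specialises to $n$ in the balanced bipartite case and behaves monotonically under the sign-flipping action of $L_{p_k}$. Verifying such a reduction for every sign pattern and every choice of extremal exponent, and confirming that the extra Rolle zeros at infinity can always be extracted, is where the technical work will concentrate.
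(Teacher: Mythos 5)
First, a point of comparison: the paper does not prove this statement at all --- it is quoted verbatim from Jameson's article \cite{Jam} and used as a black box, so your proposal can only be measured against the literature and on its own terms. On its own terms it is not a proof but a programme, and the programme has two concrete defects that go beyond the ``combinatorial bookkeeping'' you flag. The first is the zero-accounting. A boundary application $L_c$ with $c>\max_j p_j$ gives $Z(L_c[F])\geq Z(F)$ --- the Rolle zero beyond the largest zero of $G=Fe^{-cx}$ compensates the usual Rolle loss, but it never nets a gain, since $G$ cannot decay at both ends. A length-reducing application $L_{p_k}$, on the other hand, \emph{always} costs one: $Fe^{-p_kx}$ tends to the nonzero constant $a_k$ at the relevant end (every other term either decays or blows up), so there is never a free zero at infinity attached to a removal step. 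Consequently each of the $2n-2$ term removals needed to reach the base case loses one zero irrevocably, boundary steps merely break even, and the scheme as described can only reproduce the trivial bound $Z(F)\leq 2n-1$, never $n$. There is no way to ``pair'' the operations so that two terms are removed at the cost of one zero using first-order operators $L_c$.

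The second defect is that your proposed strengthened induction class cannot exist in the form you describe. After a single application of any $L_c$ the coefficients become $a_j(p_j-c)$, which are no longer in $\{+1,-1\}$, so the class ``coefficients in $\{\pm1\}$ with $n_+$ positive and $n_-$ negative terms'' is not closed under the operations the induction relies on; and a bound depending only on the sign counts $(n_+,n_-)$ is simply false for general coefficients: $F(x)=(e^x-1)(e^x-2)(e^x-3)=e^{3x}-6e^{2x}+11e^x-6$ has $n_+=n_-=2$ and three real zeros $0,\log 2,\log 3$, exceeding the would-be bound $n=2$. This shows the unit-magnitude hypothesis in ``bipartite'' is essential, and it is destroyed by the very first Laguerre step. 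The invariant that actually works is not the sign pattern of the coefficients but the sign changes of their partial sums $s_k=a_1+\cdots+a_k$: writing $F(x)=x\int_{\mathbb{R}}w(t)e^{tx}\,dt$ with $w(t)$ the signed count of exponents exceeding $t$, bipartiteness forces $w$ to be a $\pm1$-step excursion from $0$ to $0$, hence to have at most $n-1$ sign changes, and a Descartes--Laguerre bound applied to this integral representation (where the weight transforms as $w(t)\mapsto(t-c)w(t)$, losing one sign change per step) yields $Z(F)\leq n$. That is essentially Jameson's route, and it is where your sketch would have to be rebuilt.
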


\begin{cor}\label{c9}
The Olsen's functions $B$ and $b$ of the measure
$\mu$ in Theorem \ref{t5}, when $c_1=c_2=3$, cannot be analytic.

\end{cor}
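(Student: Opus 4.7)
The plan is to derive a contradiction from Jameson's bound (Theorem~\ref{t8}) applied to the bipartite Dirichlet polynomial
\[
F(q):=\sum_{i=1}^{3}e^{q\log a_{i}}-\sum_{j=1}^{3}e^{q\log b_{j}}.
\]
By Theorem~\ref{t5} with $c_{1}=c_{2}=3$, the Olsen functions are $B=\max(f,g)$ and $b=\min(f,g)$ where $f(q)=\log_{3}\sum_{i}a_{i}^{q}$ and $g(q)=\log_{3}\sum_{j}b_{j}^{q}$ are real analytic. Suppose for contradiction that both $B$ and $b$ are analytic. Then $B-b=|f-g|$ is analytic, so the analytic function $h:=f-g$ must have constant sign on $\mathbb{R}$ (any sign change would produce a point where $|h|$ is not differentiable). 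Up to swapping $f$ and $g$, I may assume $h\geq 0$ on $\mathbb{R}$; the degenerate case $h\equiv 0$ gives $\{a_{1},a_{2},a_{3}\}=\{b_{1},b_{2},b_{3}\}$ as multisets and can be set aside.

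I next identify forced zeros of $h$ of sufficiently high order. Since $a_{1}+a_{2}+a_{3}=b_{1}+b_{2}+b_{3}=1$ and each alphabet has three letters, $f(0)=g(0)=1$ and $f(1)=g(1)=0$, hence $h(0)=h(1)=0$. As $h$ is analytic and non-negative with zeros at $0$ and $1$, at each point the zero must be of even order, hence of order at least $2$; so the total order of the zeros of $h$ on $\mathbb{R}$ is at least $4$. Setting $P=\sum_{i}a_{i}^{q}$ and $Q=\sum_{j}b_{j}^{q}$, one has $h=\frac{1}{\ln 3}\ln(P/Q)=\frac{1}{\ln 3}\ln(1+F/Q)$. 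Since $Q(q_{0})>0$ at any zero $q_{0}$ of $F$ (where $P(q_{0})=Q(q_{0})$), the Taylor expansion of $\ln(1+x)$ at $x=0$ shows that $F$ and $h$ vanish to the same order at $q_{0}$. Consequently, $F$ itself has zeros of total order at least $4$ on $\mathbb{R}$.

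In the generic case where the six numbers $a_{i},b_{j}$ are pairwise distinct, $F$ is bipartite of length $6$, and Theorem~\ref{t8} gives a bound of $n=3$ on the number of zeros counted with order --- the desired contradiction. After cancelling any coincidences $a_{i}=b_{j}$, $F$ remains bipartite of length $2n'$ with $n'\leq 3$, so the same contradiction ensues. The chief subtleties are the observation that analyticity of $|h|$ forces $h$ to have constant sign (a short but not-quite-trivial standard fact) and the order-matching between $h$ and $F$ at their common zeros; the only genuinely delicate subcase is when multiplicities among the $a_{i}$'s or $b_{j}$'s survive cancellation, so that $F$ falls outside the strict bipartite hypothesis of Theorem~\ref{t8}, but this can be handled by the same idea since Jameson's argument extends to sign-balanced Dirichlet sums with integer multiplicities.
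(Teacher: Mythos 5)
Your proof is correct and takes essentially the same route as the paper: both arguments rest on Jameson's bound (Theorem~\ref{t8}) applied to the bipartite length-6 polynomial $F(q)=\sum_i a_i^q-\sum_j b_j^q$ together with the forced zeros at $q=0$ and $q=1$, concluding that tangency at both points would require at least four zeros counted with order, one more than Jameson allows. The only difference is organizational: the paper argues directly that one of the two intersections must then be transversal, so that $B=\max$ and $b=\min$ acquire a corner there, whereas you run the contrapositive via sign-constancy of $f-g$ and even-order zeros, additionally making explicit the order-matching between $f-g$ and $F$ and the degenerate/cancellation cases that the paper's proof leaves implicit.
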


\begin{proof}
Denote by
$$\theta_1(q)=\log_3(a_1^q+a_2^q+a_3^q),$$
$$\theta_2(q)=\log_3(b_1^q+b_2^q+b_3^q).$$

The number of intersections of the functions $\theta_1$ and $\theta_2$
is the same as the number of zeros of the Dirichlet polynomial
$$F(q)=a_1^q+a_2^q+a_3^q-(b_1^q+b_2^q+b_3^q).$$ But this is a
bipartite of length 6, so it has at most 3 zeros, counted with their
orders. Since 0 and 1 are already two zeros of $F$, the graphs of
$\theta_1$ and $\theta_2$ cannot be tangent at both (0,1) and (1,0)
(or $F$ will have at least four zeros!). So at least one point, say
(1,0), is an intersection of order 1 of the functions $\theta_1$ and
$\theta_2$, and the graphs get crossed at (1,0) with different
derivatives, which implies $b$ and $B$ cannot be analytic, and their
graphs cannot be tangent to each other.

\end{proof}

However, when $c_1=c_2=4$, for Olsen functions $B$ and $b$ of the
measure $\mu$, the corresponding Dirichlet polynomial
$$F(q)=a_1^q+a_2^q+a_3^q+a_4^q-(b_1^q+b_2^q+b_3^q+b_4^q)$$ will be a
bipartite of length 8, which means, $F$ has at most 4 zeros according
to Jameson. So it is not impossible that 0 and 1 are both zeros of
order 2. And this is exactly the condition we are looking for.  The
object of next theorem is to find two such proper groups of parameters
$(a_j)$ and $(b_j)$.

Denote
$$\theta(x_1,x_2,x_3,x_4;q)=\log(x_1^q+x_2^q+x_3^q+x_4^q),$$ then one
easily computes
$$\theta '(x_1,x_2,x_3,x_4;q)=\frac{x_1^q\log x_1+x_2^q\log
  x_2+x_3^q\log x_3+x_4^q\log x_4}{x_1^q+x_2^q+x_3^q+x_4^q},$$ where
$\theta '$ stands for the derivative of the function $\theta$ with
respect to $q$.

\begin{thm}\label{t10}
There exist two different groups $(a_j)$ and $(b_j)$ ($a_j,b_j\in
(0,1)$, $j=1,2,3,4$) such that $\sum a_j=\sum b_j=1$ and
    $$\theta(a_1,a_2,a_3,a_4;0)=\theta(b_1,b_2,b_3,b_4;0),$$
    $$\theta(a_1,a_2,a_3,a_4;1)=\theta(b_1,b_2,b_3,b_4;1),$$
    $$\theta '(a_1,a_2,a_3,a_4;0)=\theta '(b_1,b_2,b_3,b_4;0),$$
    $$\theta '(a_1,a_2,a_3,a_4;1)=\theta '(b_1,b_2,b_3,b_4;1).$$

\end{thm}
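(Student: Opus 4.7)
The four conditions contain substantial built-in redundancy. From the definition, $\theta(x_1,x_2,x_3,x_4;0)=\log 4$ whenever there are four arguments, and $\theta(x_1,x_2,x_3,x_4;1)=\log\sum x_i=0$ under the normalization $\sum x_i=1$, so the first two identities hold automatically. Differentiating in $q$ gives $\theta'(x;0)=\tfrac{1}{4}\sum_i\log x_i$ and $\theta'(x;1)=\sum_i x_i\log x_i$. The theorem therefore reduces to producing two probability vectors $a,b\in(0,1)^4$ which are different as multisets and satisfy
\begin{equation*}
\prod_{i=1}^{4} a_i=\prod_{i=1}^{4} b_i,\qquad \sum_{i=1}^{4}a_i\log a_i=\sum_{i=1}^{4}b_i\log b_i.
\end{equation*}

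I would establish existence by an implicit function theorem argument on the open $3$-dimensional simplex $\Delta=\{x\in(0,1)^4:\sum x_i=1\}$. Consider the smooth map
\begin{equation*}
G:\Delta\to\mathbb{R}^2,\qquad G(x)=\Bigl(\sum_i\log x_i,\;-\sum_i x_i\log x_i\Bigr).
\end{equation*}
Restricting $dG|_a$ to $T_a\Delta=\{v\in\mathbb{R}^4:\sum v_i=0\}$, a direct calculation shows that its rank equals $2$ precisely when the three vectors $(1,1,1,1)$, $(1/a_i)_i$ and $(\log a_i)_i$ are linearly independent in $\mathbb{R}^4$. This linear independence is an open generic condition, and it can be checked by hand at an explicit interior point such as $a=(1/10,2/10,3/10,4/10)$.

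Fixing such an $a$ with all four coordinates distinct, the implicit function theorem produces a non-constant smooth curve $\gamma:(-\epsilon,\epsilon)\to\Delta$ with $\gamma(0)=a$ and $G(\gamma(t))=G(a)$ for every $t$. The $S_4$-orbit of $a$ consists of $24$ distinct points, and the other $23$ representatives sit at a positive distance from $a$; hence for $|t|$ sufficiently small, $\gamma(t)$ is close to but distinct from $a$, and therefore distinct from every permutation of $a$, so it has a genuinely different multiset of coordinates. Setting $b:=\gamma(t)$ yields the required pair.

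The principal technical work is the rank computation and the linear-independence verification of the three vectors at a chosen point; everything else follows from the implicit function theorem together with a discreteness argument on the permutation orbit. The one subtlety to handle carefully is that the IFT only gives a local piece of the fiber, so $t$ must be small enough that $\gamma(t)$ stays inside the open simplex (automatic since $a$ is an interior point) and avoids the twenty-three non-trivial permutations of $a$ (automatic by continuity together with the distinctness of the $a_i$). An alternative, more explicit route would be to write down a deformation in the $1$-dimensional kernel of $dG|_{T_a\Delta}$ and correct it iteratively to satisfy the two nonlinear equalities, but the IFT approach suffices for the existence claim required here.
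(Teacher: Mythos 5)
Your proposal is correct and is essentially the paper's own argument: after noting that the $q=0$ and $q=1$ equalities of $\theta$ hold automatically, both reduce the theorem to the two scalar conditions $\prod_i a_i=\prod_i b_i$ and $\sum_i a_i\log a_i=\sum_i b_i\log b_i$, and both settle existence by the implicit function theorem at the interior point $(1/10,2/10,3/10,4/10)$ after verifying a rank-two condition --- the paper writes this in coordinates as the $2\times 4$ Jacobian of $(\varphi,\psi)$ in the perturbation variables $(t,u,v,w)$ and checks the $(u,v)$-minor $\bigl(\begin{smallmatrix}\log 4 & \log 2\\ -15/2 & -5/2\end{smallmatrix}\bigr)$ is nonsingular, which is exactly your submersion criterion for $G$ on the simplex in explicit form. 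Your $S_4$-orbit argument makes explicit the multiset-distinctness of the two groups (which is what the downstream Proposition actually needs), a point the paper handles only implicitly by taking the perturbation parameters small and non-zero.
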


\begin{proof}
Only the last two equalities are to be proved. Take
$$(a_1,a_2,a_3,a_4)=(a+t,b,c,d-t),$$
$$(b_1,b_2,b_3,b_4)=(a+u,b+v,c+w,d-(u+v+w)),$$
where $a,b,c,d$ are positive constants to be fixed later on subject to
the condition $a+b+c+d=1$, and $t,u,v,w$ are real numbers. Define
\begin{multline*}
\varphi(t,u,v,w)=(a+t)\log(a+t)+b\log b+c\log c+(d-t)\log
(d-t)-\\
(a+u)\log(a+u)-(b+v)\log(b+v)-(c+w)\log(c+w)-\\
(d-(u+v+w))\log(d-(u+v+w)),
\end{multline*}
\begin{multline*}
\psi(t,u,v,w)=\log(a+t)+\log b+\log
c+\log(d-t)-\log(a+u)-\\
\log(b+v)-\log(c+w)-\log(d-(u+v+w)).
\end{multline*}

Notice that they are just the differences of derivatives of function
$\theta$ (with respect to $(a_j)$ and $(b_j)$) at points 1 and 0. We
wish to find suitable small non-zero numbers $t,u,v,w$ such that both
of the functions $\varphi$ and $\psi$ vanish.

It is easy to get
$$\left\{
  \begin{array}{ll}
    \frac{\partial\varphi}{\partial
      t}=\log\frac{a+t}{d-t}\\ \frac{\partial\varphi}{\partial
      u}=\log\frac{d-(u+v+w)}{a+u}\\ \frac{\partial\varphi}{\partial
      v}=\log\frac{d-(u+v+w)}{b+v}\\ \frac{\partial\varphi}{\partial
      w}=\log\frac{d-(u+v+w)}{c+w}
  \end{array}
\right.$$
and
$$\left\{
  \begin{array}{ll}
    \frac{\partial\psi}{\partial
      t}=\frac{1}{a+t}-\frac{1}{d-t}\\ \frac{\partial\psi}{\partial
      u}=\frac{1}{d-(u+v+w)}-\frac{1}{a+u}\\ \frac{\partial\psi}{\partial
      v}=\frac{1}{d-(u+v+w)}-\frac{1}{b+v}\\ \frac{\partial\psi}{\partial
      w}=\frac{1}{d-(u+v+w)}-\frac{1}{c+w}
  \end{array}
\right.$$

The Jacobian matrix at point (0,0,0,0) is
$$\frac{\partial(\varphi,\psi)}{\partial(t,u,v,w)}|_{(0,0,0,0)}=
\begin{pmatrix}
\log\frac{a}{d}&\log\frac{d}{a}&\log\frac{d}{b}&\log\frac{d}{c}\\
\frac{1}{a}-\frac{1}{d}&\frac{1}{d}-\frac{1}{a}&
\frac{1}{d}-\frac{1}{b}&\frac{1}{d}-\frac{1}{c}
\end{pmatrix}\cdots\cdots(\ast)$$

Take out the two middle columns and one can expect the determinant of
this submatrix nonzero. In fact, just let
$$(a,b,c,d)=\left(\frac{1}{10},\frac{2}{10},\frac{3}{10},
\frac{4}{10}\right),$$ then
$$\frac{\partial(\varphi,\psi)}{\partial(u,v)}\Big|_{(0,0)}=
\left( \begin{array}{cc}\log 4&\log 2\\
    -\frac{15}{2}&-\frac{5}{2}\end{array} \right),$$
which is
a nondegenerate matrix.

At last, noticing that $\varphi(0,0,0,0)=\psi(0,0,0,0)=0$ and
recalling the implicit function theorem, we obtain a map $F$
satisfying that for any small $(t,w)$, we have $(u,v)=F(t,w)$ such
that
$$\varphi(t,u,v,w)=\psi(t,u,v,w)=0.$$
\end{proof}

\begin{remark}\label{r2}
One sees from the matrix $(\ast)$ that the first two columns should
not be taken out, since it is obviously a degenerate matrix. This is
because, when $v$ and $w$ are given, $t$ and $u$ can be never found to
vanish the functions $\varphi$ and $\psi$.

For example, take $v=0$, $w\neq 0$ but very small. Now the two groups
of parameters are $(a+t,b,c,d-t)$ and $(a+u,b,c+w,d-(u+w))$. And we
cannot find suitable $t$ and $u$: otherwise, it is equivalent to
obtain a bipartite of length 6 but with 4 zeros! This just causes a
contradiction.

Besides, any other two columns in the matrix $(\ast)$ are available.
\end{remark}

\begin{remark}\label{r3}
If we take
$$(a,b,c,d)=\left(\frac{1}{9},\frac{2}{9},\frac{2}{9},
\frac{4}{9}\right),$$
then we can still obtain a map $F$ satisfying that for any small
$(t,w)$, we have $(u,v)=F(t,w)$ such
that
$$\varphi(t,u,v,w)=\psi(t,u,v,w)=0.$$

Let $t=0$, $w\neq 0$ but very small. Then
$(a_1,a_2,a_3,a_4)=\left(\frac{1}{9},\frac{2}{9},\frac{2}{9},
\frac{4}{9}\right)$. Denote
$(e_1,e_2)=\left(\frac{1}{3},\frac{2}{3}\right)$, then we have
$$\log_4(a_1^q+a_2^q+a_3^q+a_4^q)=\log_2(e_1^q+e_2^q).$$

\end{remark}

\begin{pro}\label{p11}
  Let $\mathscr{A}=\{0,1,2,3\}$. There exists a probability measure
  $\mu$ on $\partial\mathscr{A}^\ast$ such that its Olsen's functions
  $B$ and $b$ are analytic and their graphs differ except at two points
  where they are tangent, with $B(0)=b(0)$, $B(1)=b(1)$, and $B(q)>b(q)$
  for all $q\neq 0,1$. Moreover $B$ and $b$ are convex and
  $B'(\mathbb{R})$ and $b'(\mathbb{R})$ both are intervals of positive
  length.

At the same time, if we let $\mathscr{A}_1=\{0,1\}$,
$\mathscr{A}_2=\{0,1,2,3\}$ and $(T_k)$ be a suitable sequence of
integers, then the statements above are true for the mixed symbolic
space $\partial\mathscr{A}_{1,2}^\ast$.
\end{pro}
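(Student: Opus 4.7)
The plan is to combine Theorem \ref{t5} with the parameters supplied by Theorem \ref{t10}, and to use Jameson's bound (Theorem \ref{t8}) to force the two candidate functions to be tangent precisely at $q=0$ and $q=1$ and to separate elsewhere. Concretely, take $(a_j)_{j=1}^{4}$ and $(b_j)_{j=1}^{4}$ from Theorem \ref{t10} and apply Theorem \ref{t5} with $c_1=c_2=4$ on $\mathscr{A}=\{0,1,2,3\}$. This produces a probability measure $\mu$ on $\partial\mathscr{A}^\ast$ whose Olsen functions satisfy
\[ B(q)=\sup\{\theta_1(q),\theta_2(q)\},\qquad b(q)=\inf\{\theta_1(q),\theta_2(q)\}, \]
where $\theta_1(q)=\log_4(a_1^q+\cdots+a_4^q)$ and $\theta_2(q)=\log_4(b_1^q+\cdots+b_4^q)$. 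Each $\theta_i$ is real-analytic and strictly convex, being a scaled log-sum-exp in the variable $q$.

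The decisive step is to show that one of the two functions dominates globally, so that $B$ and $b$ each coincide with a single analytic $\theta_i$ on all of $\mathbb{R}$. Consider
\[ F(q)=\sum_{j=1}^{4} a_j^q-\sum_{j=1}^{4} b_j^q, \]
a bipartite Dirichlet polynomial of length $8$. By Theorem \ref{t10} we have $F(0)=F(1)=F'(0)=F'(1)=0$, so $F$ has a zero of order at least $2$ at each of $q=0$ and $q=1$. Theorem \ref{t8} caps the total count of zeros (with multiplicity) at $4$, so each of these zeros has order exactly $2$ and $F$ admits no other zeros. Since zeros of even order do not induce a sign change, $F$ has constant sign on $\mathbb{R}\setminus\{0,1\}$, and as $\mathrm{sign}(\theta_1-\theta_2)=\mathrm{sign}(F)$ on that set, the same holds for $\theta_1-\theta_2$. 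Relabelling if necessary, $B=\theta_1$ and $b=\theta_2$ throughout $\mathbb{R}$; both are then analytic and convex, their graphs meet only at $(0,1)$ and $(1,0)$, where they are tangent, and $B(q)>b(q)$ elsewhere. The derivative intervals $B'(\mathbb{R})$ and $b'(\mathbb{R})$ are non-degenerate because $\theta_i'(q)\to\log_4(\min_j a_{i,j})$ as $q\to-\infty$ and $\theta_i'(q)\to\log_4(\max_j a_{i,j})$ as $q\to+\infty$, and the weights are not all equal.

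For the mixed-symbolic variant, use Remark \ref{r3}: take $(e_1,e_2)=(1/3,2/3)$ on $\mathscr{A}_1=\{0,1\}$ and the perturbed weights $(b_1,\ldots,b_4)$ on $\mathscr{A}_2=\{0,1,2,3\}$ supplied there. The identity $\log_2(e_1^q+e_2^q)=\log_4(a_1^q+\cdots+a_4^q)$ with $(a_1,\ldots,a_4)=(1/9,2/9,2/9,4/9)$ reduces the sign analysis to exactly the same bipartite length-$8$ Dirichlet polynomial treated above, so Theorem \ref{t5} with $c_1=2$, $c_2=4$, and any admissible sequence $(T_k)$ yields a measure on $\partial\mathscr{A}_{1,2}^\ast$ enjoying all the stated properties.

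The main obstacle is the simultaneous use of Jameson's bound to conclude both that the tangency orders at $0$ and $1$ are exactly two and that no further intersections exist: if $F$ changed sign at $0$ or $1$, or possessed a third real zero, then $B$ or $b$ would lose analyticity at a crossing point. Once this is secured, analyticity, convexity, the derivative intervals, and the mixed-alphabet variant all follow immediately.
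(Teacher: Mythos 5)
Your proposal is correct and takes essentially the same route as the paper: apply Theorem \ref{t5} with $c_1=c_2=4$ to the parameters of Theorem \ref{t10}, use Jameson's bound (Theorem \ref{t8}) on the bipartite length-$8$ Dirichlet polynomial $F$ to conclude that $0$ and $1$ are zeros of order exactly two and that there are no others (so one $\theta_i$ dominates globally and $B$, $b$ are each a single analytic convex function), and invoke Remark \ref{r3} for the mixed space $\partial\mathscr{A}_{1,2}^\ast$ with $c_1=2$, $c_2=4$. The details you make explicit --- that even-order zeros force constant sign of $F$ on $\mathbb{R}\setminus\{0,1\}$, and the asymptotics $\theta_i'(q)\to\log_4(\min_j a_{i,j})$ and $\log_4(\max_j a_{i,j})$ giving derivative intervals of positive length --- are exactly what the paper's terser proof leaves implicit.
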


\begin{proof}
Choose $(a_j)$ and $(b_j)$ in Theorem \ref{t10}. By Theorem \ref{t5}
(set $c_1=c_2=4$), there exists a probability measure $\mu$ on
$\partial\mathscr{A}^\ast$ such that
$$B(q)=\sup\{\log_4(a_1^q+a_2^q+a_3^q+a_4^q),
\log_4(b_1^q+b_2^q+b_3^q+b_4^q)\},$$
$$b(q)=\inf\{\log_4(a_1^q+a_2^q+a_3^q+a_4^q),
\log_4(b_1^q+b_2^q+b_3^q+b_4^q)\}.$$

However, all intersections of $\theta(a_1,a_2,a_3,a_4;q)$ and
$\theta(b_1,b_2,b_3,b_4;q)$ are clear: (0,1) and (1,0) are all of the
four intersections, both of order 2. So these two curves are tangent
to each other and one curve is always above the other (the order of
the intersection is even). This follows the first part of the
conclusion.

For the second part, just by using Remark \ref{r3}, we can find
$(a_j),(b_j)$ and $(e_j)$ such that
$$\log_4(a_1^q+a_2^q+a_3^q+a_4^q)=\log_2(e_1^q+e_2^q).$$
So the same conclusion follows.

\end{proof}

\begin{remark}\label{r4}
We know from Theorem \ref{t4} that the measures above cannot satisfy
the classsical multifractal formalism at any point $q\neq 0,1$.
\end{remark}

\section{Interpretation of Legendre transforms of $b$ and $B$}

In this section, we are to compute the dimensions of the level sets
of local H\"{o}lder exponents of the measure $\mu$ on
$\partial\mathscr{A}^\ast$ with analytic dimension functions (see
Proposition \ref{p11}).

We can see from the construction of measure $\mu$ that
$0<a_1<a_2<a_3<a_4<1$ and $0<b_1<b_2<b_3<b_4<1$. Fix such two groups
of parameters and let us present an analytic result first.

\begin{lemma}\label{l12}
For any $\alpha\in (-\log_4 a_4,-\log_4 a_1)$, there exist
$\tilde{a}_1,\,\tilde{a}_2,\,\tilde{a}_3,\,\tilde{a}_4\in (0,1)$ such that
$\sum_{i=1}^4 \tilde{a}_i=1$ and
$$-\sum_{i=1}^4 \tilde{a}_i\log_4 a_i=\alpha,$$
$$\frac{\log \frac{\tilde{a}_2}{\tilde{a}_1}}{\log
  \frac{a_2}{a_1}}=\frac{\log \frac{\tilde{a}_3}{\tilde{a}_1}}{\log
  \frac{a_3}{a_1}}=\frac{\log \frac{\tilde{a}_4}{\tilde{a}_1}}{\log
  \frac{a_4}{a_1}}.$$

\end{lemma}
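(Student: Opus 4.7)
The plan is to reduce the lemma to finding a single scalar parameter by exploiting the ratio condition. Suppose the three equal ratios have common value $s\in\mathbb{R}$. Then $\tilde a_i/\tilde a_1=(a_i/a_1)^s$ for $i=2,3,4$, so $\tilde a_i$ is proportional to $a_i^s$. Normalizing so that the sum is $1$ forces the Gibbs-type ansatz
$$\tilde a_i(s)=\frac{a_i^s}{Z(s)},\qquad Z(s)=\sum_{j=1}^{4}a_j^s.$$
Conversely, any such $\tilde a_i(s)$ obviously satisfies the ratio condition with common ratio equal to $s$, and lies in $(0,1)$ since $0<a_i^s<Z(s)$. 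So I would first record that the ratio condition is equivalent to picking some $s\in\mathbb R$ and taking $\tilde a_i=\tilde a_i(s)$; the problem is thus to show that the map
$$g(s)=-\sum_{i=1}^{4}\tilde a_i(s)\log_4 a_i=-\frac{1}{\log 4}\,\frac{Z'(s)}{Z(s)}=-\frac{1}{\log 4}(\log Z)'(s)$$
takes every value in $(-\log_4 a_4,-\log_4 a_1)$.

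Next I would study $g$. Continuity on $\mathbb R$ is immediate from $Z>0$ and smoothness. Monotonicity follows from the computation
$$-g'(s)\log 4=(\log Z)''(s)=\sum_{i=1}^{4}\tilde a_i(s)(\log a_i)^2-\Big(\sum_{i=1}^{4}\tilde a_i(s)\log a_i\Big)^{2},$$
which is the variance of $\log a_i$ under the probability vector $(\tilde a_i(s))_i$; since the $a_i$ are pairwise distinct this variance is strictly positive, so $g$ is strictly decreasing.

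Finally I would compute the endpoint limits. Dividing numerator and denominator of $g(s)$ by $a_4^s$ and letting $s\to+\infty$, every $(a_i/a_4)^s\to 0$ for $i<4$ because $a_i/a_4<1$, giving $g(s)\to-\log_4 a_4$. Dividing instead by $a_1^s$ and letting $s\to-\infty$, every $(a_i/a_1)^s\to 0$ for $i>1$ because $a_i/a_1>1$, giving $g(s)\to-\log_4 a_1$. The intermediate value theorem then produces, for each $\alpha\in(-\log_4 a_4,-\log_4 a_1)$, a (unique) $s\in\mathbb R$ with $g(s)=\alpha$, and the associated $\tilde a_i(s)$ meet all the required conditions.

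There is no serious obstacle: the content is entirely in recognizing that the ratio equalities force the Gibbs parameterization, after which the problem collapses to a one-variable intermediate value argument. The only point that requires any care is verifying that $g$ genuinely sweeps out the \emph{open} interval $(-\log_4 a_4,-\log_4 a_1)$, which is exactly where one uses that the $a_i$ are distinct (to get strict monotonicity) and that the extremes $a_1$ and $a_4$ are distinct from the intermediate $a_2,a_3$ (to identify the two limiting values).
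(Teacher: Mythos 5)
Your proof is correct and follows essentially the same route as the paper: the ratio condition forces the parameterization $\tilde a_i = a_i^q/\sum_j a_j^q$, and then $-\sum_i \tilde a_i \log_4 a_i = -\theta'(q)$ for $\theta(q)=\log_4\sum_j a_j^q$ sweeps the open interval $(-\log_4 a_4, -\log_4 a_1)$ as $q$ ranges over $\mathbb{R}$. You merely spell out details the paper leaves implicit (strict monotonicity via the variance of $\log a_i$, and the endpoint limits as $s\to\pm\infty$), which is a welcome amplification rather than a different argument.
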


\begin{proof}
Assume that $\tilde{a}_1,\tilde{a}_2,\tilde{a}_3,\tilde{a}_4\in (0,1)$
such that $\sum_{i=1}^4 \tilde{a}_i=1$.

For a given $q$, to insure that
$$\frac{\log \frac{\tilde{a}_2}{\tilde{a}_1}}{\log
  \frac{a_2}{a_1}}=\frac{\log \frac{\tilde{a}_3}{\tilde{a}_1}}{\log
  \frac{a_3}{a_1}}=\frac{\log \frac{\tilde{a}_4}{\tilde{a}_1}}{\log
  \frac{a_4}{a_1}}=q,$$
one has
$$\tilde{a}_i=\tilde{a}_1(\frac{a_i}{a_1})^q,\,i=2,3,4.$$

So
$$\sum_{i=1}^4 \tilde{a}_i=\tilde{a}_1(1+\sum_{i=2}^4
(\frac{a_i}{a_1})^q)=1,$$ which implies
$$\tilde{a}_i=\frac{a_i^q}{\sum_{j=1}^4 a_j^q},\,i=1,2,3,4.$$

One sees that $q$ can take any value in $(-\infty,+\infty)$. Now,
consider the decreasing and convex function
$$\theta(q)=\log\sum_{j=1}^4 a_j^q,$$
then
$$-\sum_{i=1}^4 \tilde{a}_i\log_4 a_i=-\sum_{i=1}^4
\frac{a_i^q}{\sum_{j=1}^4 a_j^q} \log_4 a_i=-\frac{\sum_{i=1}^4
  a_i^q\log_4 a_i}{\sum_{j=1}^4 a_j^q}=-\theta'(q),$$
which reaches any value in $(-\log_4 a_4,-\log_4 a_1)$.

\end{proof}

Without losing generality, we may assume that $a_1<b_1$, then we
have
$$B(q)=\log_4(a_1^q+a_2^q+a_3^q+a_4^q),$$
$$b(q)=\log_4(b_1^q+b_2^q+b_3^q+b_4^q).$$

\begin{thm}\label{t13}
For any $\alpha\in (-\log_4 b_4,-\log_4 b_1)$, we have
$$\emph{dim} X(\alpha)=b^\ast(\alpha),$$
$$\emph{Dim} X(\alpha)=B^\ast(\alpha).$$

\end{thm}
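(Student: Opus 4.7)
The plan is to sandwich $\dim X(\alpha)$ and $\Dim X(\alpha)$ between matching upper and lower bounds. The upper bounds $\dim X(\alpha) \leq b^\ast(\alpha)$ and $\Dim X(\alpha) \leq B^\ast(\alpha)$ follow at once from Theorem \ref{t2}, so the substantive work is the reverse inequalities, which I plan to extract from a single application of Lemma \ref{l3} with a carefully chosen auxiliary measure $\nu$ on $\partial\mathscr{A}^\ast$. For the fixed $\alpha$, invoke Lemma \ref{l12} on the family $(a_i)$ to obtain a probability vector $(\tilde a_i)$ with $-\sum_i \tilde a_i \log_4 a_i = \alpha$, and do the same on $(b_i)$ to produce $(\tilde b_i)$ with $-\sum_i \tilde b_i \log_4 b_i = \alpha$. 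Then define $\nu$ by the same multiplicative recipe used for $\mu$ but using $\tilde a$ on coordinates in the $a$-blocks $[T_{2k-1}, T_{2k})$ and $\tilde b$ on coordinates in the $b$-blocks $[T_{2k}, T_{2k+1})$.

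Because the ultrametric identifies balls with cylinders and packings at scale $4^{-n}$ with the partition $\mathscr{A}^n$, the auxiliary function $\varphi$ of Lemma \ref{l3} reduces to
$$\varphi(x) = \limsup_{n \to \infty} \frac{N_n \log \sum_i a_i^x \tilde a_i + (n - N_n) \log \sum_i b_i^x \tilde b_i}{n \log 4}.$$
Both inner sums equal $1$ at $x = 0$ and both have $x$-derivative $-\alpha \log 4$ at $x = 0$ by the very normalization from Lemma \ref{l12}, so regardless of how $N_n/n$ oscillates, $\varphi(0) = 0$ and $\varphi'(0) = -\alpha$. Set $h_a = -\sum_i \tilde a_i \log_4 \tilde a_i$ and $h_b = -\sum_i \tilde b_i \log_4 \tilde b_i$. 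A routine law-of-large-numbers argument on the independent block factors of $\nu$, using that $T_{k+1}/T_k \to \infty$ forces each fresh block to dominate the accumulated past, shows that for $\nu$-almost every $x$ the $\mu$-local exponent is exactly $\alpha$ (so $\nu$ concentrates on $X(\alpha)$) and that
$$\liminf_{n \to \infty} \frac{\log \nu(\ball(x, 4^{-n}))}{\log 4^{-n}} = \min(h_a, h_b), \quad \limsup_{n \to \infty} \frac{\log \nu(\ball(x, 4^{-n}))}{\log 4^{-n}} = \max(h_a, h_b),$$
with the two extremes realized along $n = T_{2k+1}$ (where $N_n/n \to 0$) and $n = T_{2k}$ (where $N_n/n \to 1$).

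The standard Legendre-transform identity $f^\ast(\alpha) = f(q_\ast) + q_\ast \alpha$ at the argmin, applied to each side, identifies $h_a = B^\ast(\alpha)$ and $h_b = b^\ast(\alpha)$; combined with $b^\ast \leq B^\ast$ (inherited from $b \leq B$), the $\min$ above is $b^\ast(\alpha)$ and the $\max$ is $B^\ast(\alpha)$. Lemma \ref{l3} then delivers $\dim X(\alpha) \geq b^\ast(\alpha)$ and $\Dim X(\alpha) \geq B^\ast(\alpha)$, and the sandwich closes. The main obstacle in the argument is the $\limsup$ defining $\varphi$: differentiability of such a $\limsup$ at $0$ is not automatic, and the entire scheme succeeds only because our matched normalization of $(\tilde a_i)$ and $(\tilde b_i)$ makes the first-order Taylor terms of the two competing expressions coincide, so that $\varphi(x) = -\alpha x + O(x^2)$ and $\varphi'(0)$ genuinely exists and equals $-\alpha$; by contrast the LLN step along the sparse subsequences $(T_k)$ is straightforward, since $T_{k+1}/T_k \to \infty$ makes the tail contributions from earlier blocks negligible.
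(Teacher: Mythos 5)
Your proposal is correct and takes essentially the same route as the paper's proof: the same auxiliary measure $\nu$ built from the vectors $(\tilde{a}_i),(\tilde{b}_i)$ of Lemma \ref{l12}, the same function $\varphi$ with $\varphi'(0)=-\alpha$ secured by the matched first-order terms, the strong law of large numbers for the $\nu$-local dimensions, Lemma \ref{l3} for the lower bounds and Theorem \ref{t2} for the upper bounds. The only differences are expository: you keep $\varphi$ as a $\limsup$ of convex combinations where the paper simplifies it to $\log_4\max\left\{\sum_i a_i^x\tilde{a}_i,\sum_i b_i^x\tilde{b}_i\right\}$, and you invoke the Legendre identity at the critical point abstractly where the paper verifies $h(\tilde{a})=B^\ast(\alpha)$ and $h(\tilde{b})=b^\ast(\alpha)$ by explicit computation with $q_a$ and $q_b$.
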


\begin{proof}
By Lemma \ref{l12}, we can construct a new probability measure $\nu$ on the
tree just as $\mu$, but replacing $\{a_i,b_i\}$ with
$\{\tilde{a}_i,\tilde{b}_i\}$ such that
$$\sum_{i=1}^4 \tilde{a}_i=\sum_{i=1}^4 \tilde{b}_i=1,$$
$$-\sum_{i=1}^4 \tilde{a}_i\log_4 a_i=-\sum_{i=1}^4 \tilde{b}_i\log_4
b_i,$$
$$\frac{\log \frac{\tilde{a}_2}{\tilde{a}_1}}{\log
  \frac{a_2}{a_1}}=\frac{\log \frac{\tilde{a}_3}{\tilde{a}_1}}{\log
  \frac{a_3}{a_1}}=\frac{\log \frac{\tilde{a}_4}{\tilde{a}_1}}{\log
  \frac{a_4}{a_1}},$$
$$\frac{\log \frac{\tilde{b}_2}{\tilde{b}_1}}{\log
  \frac{b_2}{b_1}}=\frac{\log \frac{\tilde{b}_3}{\tilde{b}_1}}{\log
  \frac{b_3}{b_1}}=\frac{\log \frac{\tilde{b}_4}{\tilde{b}_1}}{\log
  \frac{b_4}{b_1}}.$$

Fix $\lambda< 1$ and define
\begin{multline*}
\varphi_\nu(x)=\limsup_{\delta\rightarrow 0}\frac{-1}{\log
  \delta}\log\sup\left\{\sum_i
\mu(B_i)^x\nu(B_i)\ :\ \right.\\ \left.(B_i)_i \textrm{ packing of
}S_\mu\ \textrm{with }\lambda\delta<r_i\leq\delta\vphantom
   {\sum_i}\right\}
\end{multline*}

Then it is easy to compute
$$\varphi_\nu(x)=\log_4 \max\left\{\sum_{i=1}^4 a_i^x
\tilde{a}_i,\sum_{i=1}^4 b_i^x \tilde{b}_i\right\}.$$

The method of choosing $\{\tilde{a}_i,\tilde{b}_i\}$ insures that
$\varphi_\nu'(0)$ exists, so we define
$$\alpha=\alpha_\nu=-\varphi_\nu'(0)=-\sum_{i=1}^4 \tilde{a}_i\log_4
a_i=-\sum_{i=1}^4 \tilde{b}_i\log_4 b_i.$$

It is obvious that $\alpha\in (-\log_4 b_4,-\log_4 b_1)$. On the other
hand, for any $\beta\in (-\log_4 b_4,-\log_4 b_1)$, there exist
$\tilde{a}_i,\tilde{b}_i$ such that $\alpha_\nu=\beta$, where $\nu$ is
the corresponding measure with respect to
$\tilde{a}_i,\tilde{b}_i$. So $\alpha$ can take any value in the
interval $(-\log_4 b_4,-\log_4 b_1)$.

Now we estimate the bounds of the dimensions of the level sets. The
strong law of large numbers shows that
$$\liminf \frac{\log_4
  \nu(B(x,4^{-n}))}{-n}=\min\{h(\tilde{a}),h(\tilde{b})\},$$
$$\limsup \frac{\log_4
  \nu(B(x,4^{-n}))}{-n}=\max\{h(\tilde{a}),h(\tilde{b})\},$$
for $\nu$-almost every $x$, where
$$h(\tilde{a})=-\sum_{i=1}^4 \tilde{a}_i\log_4 \tilde{a}_i,$$
$$h(\tilde{b})=-\sum_{i=1}^4 \tilde{b}_i\log_4 \tilde{b}_i.$$

So it deduces from Lemma \ref{l3} that
$$\dim X(\alpha)\geq \min\{h(\tilde{a}),h(\tilde{b})\},$$
$$\Dim X(\alpha)\geq \max\{h(\tilde{a}),h(\tilde{b})\}.$$

To compute $h(\tilde{a})$ and $h(\tilde{b})$, set
$$q_a=\frac{\log \frac{\tilde{a}_2}{\tilde{a}_1}}{\log
  \frac{a_2}{a_1}}=\frac{\log \frac{\tilde{a}_3}{\tilde{a}_1}}{\log
  \frac{a_3}{a_1}}=\frac{\log \frac{\tilde{a}_4}{\tilde{a}_1}}{\log
  \frac{a_4}{a_1}},$$
then
\begin{eqnarray}
B'(q_a)&=&\frac{a_1^{q_a}\log_4 a_1+a_2^{q_a}\log_4 a_2+a_3^{q_a}\log
  a_3+a_4^{q_a}\log
  a_4}{a_1^{q_a}+a_2^{q_a}+a_3^{q_a}+a_4^{q_a}}\nonumber\\
&=&\frac{\log_4 a_1+\frac{a_2^{q_a}}{a_1^{q_a}}\log_4
  a_2+\frac{a_3^{q_a}}{a_1^{q_a}}\log_4
  a_3+\frac{a_4^{q_a}}{a_1^{q_a}}\log_4
  a_4}{1+\frac{a_2^{q_a}}{a_1^{q_a}}+
  \frac{a_3^{q_a}}{a_1^{q_a}}+\frac{a_4^{q_a}}{a_1^{q_a}}}\nonumber\\
&=&\frac{\log_4 a_1+\frac{\tilde{a}_2}{\tilde{a}_1}\log_4
  a_2+\frac{\tilde{a}_3}{\tilde{a}_1}\log_4
  a_3+\frac{\tilde{a}_4}{\tilde{a}_1}\log_4
  a_4}{1+\frac{\tilde{a}_2}{\tilde{a}_1}+
  \frac{\tilde{a}_3}{\tilde{a}_1}+\frac{\tilde{a}_4}{\tilde{a}_1}}\nonumber\\
&=&\tilde{a}_1
\log_4 a_1+\tilde{a}_2 \log_4 a_2+\tilde{a}_3 \log_4 a_3+\tilde{a}_4
\log_4 a_4\nonumber\\
&=&\varphi_\nu'(0)=-\alpha\nonumber.
\end{eqnarray}
Moreover,
\begin{eqnarray}
B(q_a)+q_a \alpha
&=&\log_4(a_1^{q_a}+a_2^{q_a}+a_3^{q_a}+a_4^{q_a})+q_a\alpha\nonumber\\
&=&\log_4 a_1^{q_a}(1+\frac{a_2^{q_a}}{a_1^{q_a}}+
\frac{a_3^{q_a}}{a_1^{q_a}}+\frac{a_4^{q_a}}{a_1^{q_a}})+q_a\alpha\nonumber\\
&=& q_a\log_4 a_1+\log_4(1+\frac{\tilde{a}_2}{\tilde{a}_1}+
\frac{\tilde{a}_3}{\tilde{a}_1}+
\frac{\tilde{a}_4}{\tilde{a}_1})+q_a\alpha\nonumber\\
&=& q_a\log_4
a_1-\log_4 \tilde{a}_1-q_a(\tilde{a}_1 \log_4 a_1+\cdots+\tilde{a}_4 \log_4
a_4)\nonumber\\
&=& -\log_4 \tilde{a}_1+\tilde{a}_2\log_4\frac{a_1^{q_a}}{a_2^{q_a}}+
\tilde{a}_3\log_4\frac{a_1^{q_a}}{a_3^{q_a}}+
\tilde{a}_4\log_4\frac{a_1^{q_a}}{a_4^{q_a}}\nonumber\\
&=& -\log_4
\tilde{a}_1+\tilde{a}_2\log_4\frac{\tilde{a}_1}{\tilde{a}_2}+
\tilde{a}_3\log_4\frac{\tilde{a}_1}{\tilde{a}_3}+
\tilde{a}_4\log_4\frac{\tilde{a}_1}{\tilde{a}_4}\nonumber\\
&=& -\tilde{a}_1\log_4\tilde{a}_1-\tilde{a}_2\log_4\tilde{a}_2-
\tilde{a}_3\log_4\tilde{a}_3-\tilde{a}_4\log_4\tilde{a}_4\nonumber\\
&=& h(\tilde{a})\nonumber.
\end{eqnarray}
And set
$$q_b=\frac{\log \frac{\tilde{b}_2}{\tilde{b}_1}}{\log
  \frac{b_2}{b_1}}=\frac{\log \frac{\tilde{b}_3}{\tilde{b}_1}}{\log
  \frac{b_3}{b_1}}=\frac{\log \frac{\tilde{b}_4}{\tilde{b}_1}}{\log
  \frac{b_4}{b_1}},$$
with the very same arguments, we have
$$b'(q_b)=-\alpha,$$
$$b(q_b)+q_b \alpha=h(\tilde{b}).$$
Thus
$$h(\tilde{a})=B(q_a)+q_a \alpha=B(q_a)-q_a B'(q_a)=B^\ast(-B'(q_a)),$$
$$h(\tilde{b})=b(q_b)+q_b \alpha=b(q_b)-q_b b'(q_b)=b^\ast(-b'(q_b)),$$
which gives the lower bounds of the dimensions of the level sets:
\begin{eqnarray*}
\dim X(\alpha) &\geq& b^\ast(-b'(q_b))=b^\ast(\alpha),\\
\Dim X(\alpha) &\geq& B^\ast(-B'(q_a))=B^\ast(\alpha).
\end{eqnarray*}

But we have the opposite inequalities according to Theorem \ref{t2}, so
finally we get
\begin{eqnarray*}
\dim X(\alpha) &=& b^\ast(\alpha),\\
\Dim X(\alpha) &=& B^\ast(\alpha).
\end{eqnarray*}
\end{proof}

\section{Measures onto the real line}

\subsection{Generalized Gray codes}{\ }

Let $\mathscr{A}=\{0,1,\cdots,c-1\}$. There is a natural way to enumerate the $n$-cylinders
of $\partial\mathscr{A}^\ast$: for $w = \varepsilon_1 \varepsilon_2\cdots\varepsilon_n$, we set
$$\iota(w) = \sum_{j=0}^{n-1} \varepsilon_{n-j}c^j,$$
but we need other orderings of the cylinders.

We define a transformation $g$ on $\mathscr{A}^\ast$, such that
$g(j)=j$ for all $j\in {\mathscr A}$ and such that, for
any $w=\varepsilon_1 \varepsilon_2\cdots\varepsilon_n\in \mathscr{A}^\ast$,
$$g(\varepsilon_1 \varepsilon_2\cdots\varepsilon_n)=g(\varepsilon_1 \varepsilon_2\cdots\varepsilon_{n-1})\cdot k\,\,(n\geq 2),$$
where $k\in \mathscr{A}$ and $k\equiv (\varepsilon_n-\varepsilon_{n-1})\pmod{c}$.

We can see from the definition that if $w,v\in \mathscr{A}^n$ are two
contiguous words under natural enumeration, then $g(w)$ and $g(v)$
differ by one digit exactly. This property extends the one of the
classical Gray code \cite{Gra}, which is the one we obtain when $c=2$.

Also, it is obvious that $v\prec w$ implies $g(v)\prec g(w)$. So $g$
induces a transformation on $\partial\mathscr{A}^\ast$, still denoted
by $g$: for any $x=\varepsilon_1 \varepsilon_2\cdots\varepsilon_n\cdots $,
$g(x)$ is the unique element of $\bigcap_{n\geq 1}[g(\varepsilon_1
\varepsilon_2\cdots\varepsilon_n)].$ It is easy to see that $g$ is an isometry.

\begin{remark}\label{r5}
  When $c\geq 3$, the way to define a Gray code is not unique. In
  fact, let $\widetilde{g}(j) = j$ for $j\in {\mathscr A}$, and, for
  $w\ne \epsilon$ and $j\in {\mathscr A}$,
$$\widetilde{g}(w\cdot j)=\widetilde{g}(w)\cdot k,$$
where $k=c-1-j$ when $\iota(w)$ is odd, and $k=j$ when $\iota(w)$ is
even. Then $\widetilde{g}$ has the same properties as $g$.
\end{remark}

\subsection{Measures on $[0,1]$}{\ }

In this section, we work on $[0,1]$ and try to get the same conclusion
as Proposition \ref{p11}, i.e. to find a probability measure on
$[0,1]$ such that its Olsen's functions $B$ and $b$ are analytic and
their graphs differ except at two points where they are tangent, with
$B(0)=b(0)$, $B(1)=b(1)$, and $B(q)>b(q)$ for all $q\neq 0,1$.

There is a natural map $\gamma$ from $\partial\mathscr{A}^\ast$ onto
$[0,1]$: $$\text{for\quad} x=\varepsilon_1 \varepsilon_2\cdots\varepsilon_n
\cdots\in \partial\mathscr{A}^\ast ,\quad \gamma(x) = \sum_{n\geq
  1} \varepsilon_n c^{-n}.$$
This map sends cylinders onto $c$-adic intervals; more precisely, if
$w\in \mathscr{A}^n$, the image of $[w]$ under $\gamma$ is the interval
$[\iota(w)c^{-n},(\iota(w)+1)c^{-n}]$.

Now if $\mu$ is a  measure considered in Theorem \ref{t5}, with the
restriction $c_1=c_2=c$, the measure $\nu$ image of $\mu$ under
$\gamma\comp g^{-1}$ (i.e., $\nu(E)=\mu(g\comp\gamma^{-1}(E))$ for
any Borel set $E\subset [0,1]$) is doubling,  because of the
properties of the Gray code $g$.

Since $\nu$ is doubling it is well known that to perform its
multifractal analysis one can use coverings and packings with $c$-adic
intervals as well as coverings and packings with genaral intervals.

So, since $c$-adic intervals correspond to cylinders, $\mu$ and $\nu$
share the same multifractal analysis.

In particular, if we use the measure in Theorem \ref{t13}, the $b$ and
$B$ functions for $\nu$ are analytic, their graphs have intersections of
order 2 at points (1,0) and (0,1), and, for $q\notin \{0,1\}$, $b(q)<
B(q)$. Moreover the Legendre transforms of these functions give the
Hausdorff and packing dimensions of the level sets of
local H\"{o}lder exponents of the measure $\nu$.

\flushleft {\bf Acknowledgements}

The author is grateful to professor Jacques Peyri{\`e}re for his
guidance and valuable comments.

\end{document}